\newcommand{\bea}{\begin{eqnarray}}
\newcommand{\eea}{\end{eqnarray}}
\newcommand{\bee}{\begin{eqnarray*}}
\newcommand{\eee}{\end{eqnarray*}}
\newcommand{\nn}{\nonumber}
\newcommand{\lb}{\label}
\newcommand{\la}{\lambda}
\newtheorem{thm}{Theorem}[section]
\newtheorem{lem}[thm]{Lemma}
\newtheorem{remark}{Remark}
\begin{document}

\begin{frontmatter}

\title{Modeling and analysis of a discrete-time group-arrival and batch-size-dependent service queue with single and multiple vacation}

\author{S. Pradhan\corref{mycorrespondingauthor}}
\ead[url]{spiitkgp11@gmail.com}
\cortext[mycorrespondingauthor]{Corresponding author}
\ead{spiitkgp11@gmail.com}
\author{N. Nandy}
\ead{nilanjannanday2@gmail.com}
\address{Department of Mathematics, Visvesvaraya National Institute of Technology, Nagpur-440010, India}

\begin{abstract}
Discrete-time queueing system has widespread applications in packet switching networks, internet protocol, Broadband
Integrated Services Digital Network (B-ISDN), circuit switched time-division multiple access etc. In this paper, we analyze an infinite-buffer discrete-time group-arrival queue with single and multiple vacation policy where processing/transmission/service times dynamically vary with batch-size under service. The bivariate probability generating function of queue length and server content distribution has been derived first, and from that we extract the probabilities
in terms of roots of the characteristic equation. We also find the joint distribution at arbitrary and pre-arrival slot. Discrete phase type distribution plays
a noteworthy role in order to regulate the high transmission error through a particular channel. In view of this, numerical illustrations include the service
time distribution to follow the discrete phase type distribution.

\end{abstract}

\begin{keyword}
Discrete-time, group-arrival, batch-service, batch-size-dependent, vacation.
\MSC[2010] 60G05\sep  60K25
\end{keyword}
\end{frontmatter}
\section{Introduction}
Since last few decades, discrete-time queueing models have been a major concern of research due to its wide range of applications in data transmission over satellite, cable access networks, ATM switching elements, IEEE802.11 n WLANs etc. The fundamental mechanism is based on the packet switching principle in which data processing takes place in regular and equally spaced time intervals defined as slots. Detailed applications can be found in Bruneel and Kim \cite{bruneel1993discrete}, Alfa \cite{alfa2010queueing}, Samanta et al. \cite{samanta2007discrete,samanta2007analyzing}, Gupta et al. \cite{gupta2007discrete,gupta2014analysis}, Claeys et al. \cite{claeys2010complete,claeys2010queueing} and references therein.\\
\hspace*{0.3cm}In many real-life situations, it is observed that after the service completion of a batch, the number of customers in the queue is less than the pre-defined minimum threshold, and hence service can not be provided in the next round until the minimum threshold has been accumulated. During this time, the server may leave the system for an arbitrary amount of time to do some other work. These type of queues are known as vacation queues and has potential applications in processor schedules in computer and switching system, shared resource maintenance, designing of local area networks (LAN), manufacturing system with server breakdown etc. Various type of vacation queueing models for both discrete- and continuous- time set-up can be found in Dhosi \cite{doshi1986queueing}, Baba\cite{baba1986mx}, Takagi \cite{takagi1993queueing}, Lee et al.\cite{lee1994analysis}, Tian \cite{tian2006vacation}, Reddy et al.\cite{reddy1999non},  Gupta et al.\cite{gupta2004finite,gupta2005finite}, Sikdar et al.\cite{sikdar2005queue}, Jeyakumar et al.\cite{jeyakumar2012study} etc.\\
\hspace*{0.3cm}Batch-service queues have applications in mobile crowdsourcing app for smart cities, recreational devices in amusement park, group testing of blood samples for detecting HIV/Influenza/viruses or to eliminate defective items in manufacturing systems etc. These can also be applied in packet switched telecommunication networks by regulating the transmission rate for developing congestion control techniques.  Recent development of batch-service queues can be found in Goswami et al. \cite{goswami2006analyzing,goswami2006discrete}, Germs and Foreest \cite{germs2013analysis}, Barbhuiya and Gupta \cite{barbhuiya2019discrete}, Bank and Samanta \cite{bank2020analytical} and references therein.\\
\hspace*{0.3cm}In recent times, a few researchers have focused on batch-service queues with batch-size-dependent service due to their usefulness in production and transportation, package delivery, group testing of blood/urine samples, etc. Batch-size-dependent service policy also helps us to reduce the congestions. For the betterment of human civilization, a tremendous research is rapidly growing on this type of queues. By considering discrete-time $Geo^X/G_n^{(l,c)}/1$ and D-BMAP$/G^{(l,c)}_r/1$ queue, Claeys et al.  \cite{claeys2011analysis,claeys2013analysis} derived joint probability/vector generating function (pgf/vgf) of queue and server content at arbitrary slot. Claeys et al. \cite{claeys2013tail} also focused on the analysis of tail probabilities for the customer delay. They also focused on the influence of correlation of the arrival process to study the behavior of the system. However, the complete extraction procedure of the joint probabilities was not discussed.  Banerjee et al. \cite{banerjee2014analysis} provided the complete joint distribution of queue and server content for a discrete-time finite-buffer $Geo/G_r^{(a,b)}/1/N$ queue. Yu and Alfa \cite{yu2015algorithm} analyzed D-MAP$/G_r^{(1,a,b)}/1/N$ queue in which they reported the queue length and server content distribution together using both embedded Markov chain technique (EMCT) and quasi birth and death (QBD) process. Unfortunately, in these analysis no one included any type of vacation policy. On the contrary, for the continuous-time queue, Banerjee et al.\cite{banerjee2012reducing,banerjee2013analysis}, Pradhan et al.\cite{pradhan2017analysis,pradhan2020distribution,pradhan2019analysis} analyzed batch-size-dependent service queue and provided the detailed extraction procedure. \\
\hspace*{0.3cm}Recently, Nandy and Pradhan \cite{onthejoint2020} analyzed an infinite-buffer discrete-time batch-size-independent service $Geo/G^{(a,b)}_n/1$ queue with single and multiple vacation where customers arrive individually according to Bernoulli arrival process. However, in several practical scenarios, customers may arrive in groups or batches. In order to cope with those circumstances, it is necessary to analyze $Geo^X/G^{(a,b)}_n/1$ queue as the inclusion of group/batch arrival makes the analysis more complex and challenging. The governing equations, probability generating functions, joint probabilities at different epochs, numerical illustrations are completely different from the previous $Geo/G^{(a,b)}_n/1$ queueing model. To be more precise, the joint queue and server content distribution for the discrete-time, group arrival batch-size-independent service $Geo^X/G^{(a,b)}_n/1$ queue with single and multiple vacation is not available so far in the literature to the best of authors' knowledge. The server capability can be maximized to reduce average waiting time of a customer as well as cost of the system for which the server content distribution is an essential tool. It also has to be kept in mind that the server utilization and processing can be met together for maximum response for which system designer is significantly concerned about both queue and server content distribution together.\\
\hspace*{0.3cm}On account of all those perspective, in this paper, we focus on both the queue and server content distribution for an infinite-buffer group-arrival discrete-time batch processing transmission channel with batch-size-dependent service policy, and single and multiple vacation. It should be noted here that group-arrival along with single and multiple vacation as well as batch-size-dependent service policy makes the mathematical analysis quite complicated from both analytical and computational point of view. Firstly, we develop the steady-state governing equations of the concerned queueing model. Secondly, we focus on the derivation of the bivariate pgf of queue and server content distribution at service completion epoch which is the central focal part of our analysis. After the extraction of joint probabilities from the bivariate pgf we generate a relationship with the arbitrary slot probabilities. Some relevant marginal distributions and performance measures are provided which fulfills the essence of the system designer. Finally the results are illustrated with suitable and interesting numerical examples which is fruitful to real-life circumstances.\\
\hspace*{0.3cm}Rest of the paper is organized as follows: Section \ref{PA} describes the suggested application of the queueing model while \ref{MD} gives the detailed model description. The steady-state equations of the model is described in Section \ref{SE}. Procedure of finding joint probability distribution at service completion epoch is described in Section \ref{JDD} whereas Section \ref{JDA} gives the relation between joint probabilities at arbitrary slot and service completion epoch. Marginal distributions and performance measures are provided in Section \ref{MaD}. Numerical examples are sketched in Section \ref{NR} followed by the conclusion.
\section{Practical application of the suggested queueing system}\label{PA}
The described system has several real-life applications. In modern telecommunication system the messages, data, video, images, signals are broken into manageable information packets initially. Since the arrival occurs in batches the situation becomes more realistic as the data in the form of group of packets can arrive from various places at a time. The packets are transmitted in a fixed slot as a single entity through a common interface in ATM multiplexing and switching technology, IEEE802.11 and WLANs, internet protocols with a minimum threshold and maximum limit. In this context, one can consider the service/transmission rule as general bulk service $\left(a, b\right)$ rule. The mechanism behind this is to construct only one header per aggregate batch instead of one header per single entity. As a consequence, the system become more efficient and improves the quality of service by reducing both processing time and total cost.\\
\hspace*{0.3cm}In communication system several processors are used for which considerable amount of testing and maintenance time is required. When the number of customers are less than the minimum threshold in the queue the maintenance work can be completed for a random amount of time which can be thought as vacation time. After the maintenance work it provides high reliability of the system and using that the congestion and lags in later services may be reduced. Excessive delays of transmission time can be bypassed using batch-size-dependent service mechanism as the processing/transmission rate dynamically vary with the size of packets.
\section{\textbf{Model description}}\label{MD}
\begin{itemize}

\item \emph{Discrete-time set up:}  The time axis is slotted into equal lengths. Let the time axis be marked by $0$, $\omega$, $2 \omega$,\ldots, $k \omega$,\ldots. For our convenience we set $\omega = 1$, so that each interval is of unit length. The time axis is now $0$, $1$, $2$,\ldots, $k$, \ldots Here we discuss the model for late arrival delayed access system (LAS-DA) and thus a potential batch arrival occurs in $\left(k-, k\right)$ and the interval of batch-departure is $\left(k,k+\right)$. For the discrete-time set-up, an arrival and a departure/transmission takes place simultaneously at a slot boundary. Different epochs of LAS-DA system are shown in figure\ref{figg1}.
\begin{figure}[h!]
	\begin{center}
\includegraphics[scale=0.6]{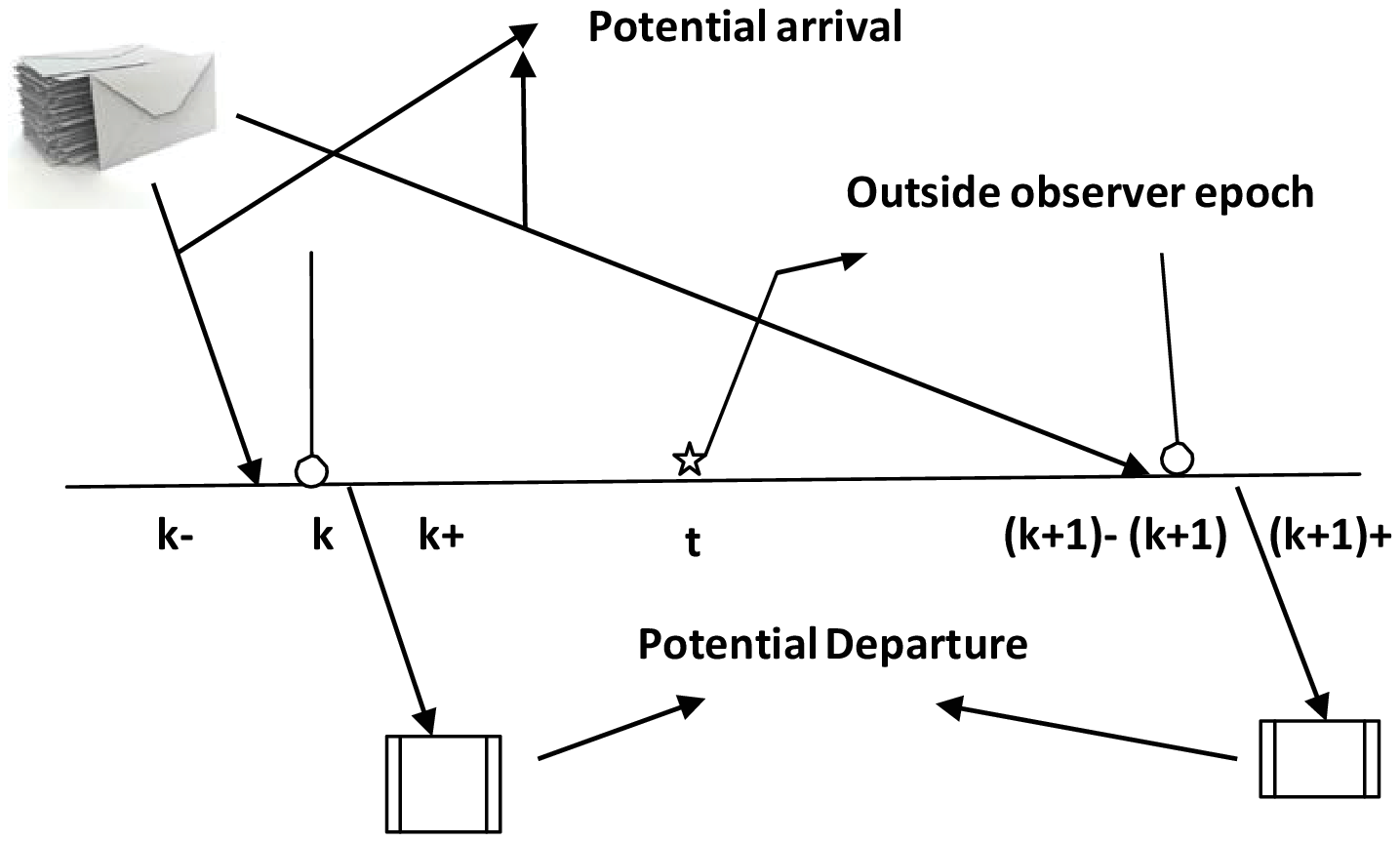}
	\end{center}
	\caption{Different epochs of LAS-DA system}\lb{figg1}
\end{figure}

\item \emph{Arrival process:} Here the group of customers/messages in terms of packets arrive according to compound bernoulli process with mean batch arrival rate $\lambda$, which is analytically tractable and fractal characteristics like self-similarity is absent. Thus the inter-arrival time follows geometric distribution with probability mass function (pmf) $\psi_{n} = {\bar{\lambda}}^{n-1}\lambda$, $0 \leq \lambda \leq 1$, $n \geq 1$ and $\bar{\lambda}=1-\lambda$. The size of arriving group of customers/messages in a slot are identically distributed random variables with probability distribution $Pr(X=m)= g_{m}$, $m$ $\epsilon$ $\mathbb{N}$, where $X$ is the generic batch size with finite mean $E(X)= \bar{g}$ and associated pgf $G(z)= \sum_{m=1}^{\infty} g_{m}z^{m}$.

\item \emph{Batch-service rule:} The packets are transmitted in group/batches according to general bulk service $(a,b)$ rule. The server initiates the service only when there is minimum `$a$' number of customers/packets present in the queue for service. For the queue size $r$ $(a \leq r \leq b)$, entire group is taken into service. If the size of the queue exceeds `$b$', then the server transmits exactly `$b$' packets and rest of them wait for next round of service. A newly arrived customer/message can not join the ongoing service even if there is a free capacity.
	
\item \emph{Service process:} In most of the real-life scenarios, the transmission/processing time does not follow any particular well known probability distribution. In order to tackle this we assume that the service time follows a general distribution using that a large class of probability distribution can be covered. We also assume that the service time is dependent on the batch size under service. This service mechanism reduces congestion in a telecommunication networks and improves the system productivity. Let the random variable $T_{i}$, $a \leq i \leq b$, be defined as the service time of a batch of $i$ customers/messages with pmf $Pr(T_{i}=n) = s_{i}(n)$, $n=1,2,3,\ldots$ and the corresponding pgf is given by $S^*_{i}(z) = \sum_{n=1}^{\infty}s_{i}(n)z^{n}$. Also the mean service time of batch of size $i$ is defined as $s_{i} = \frac{1}{\mu_{i}} = S_i^{*(1)}(1)$, $a \leq i \leq b$, where $S_i^{*(1)}(1)$ is the first order derivative evaluated at $z=1$.

\item \emph{Vacation process:} The concerned queueing model is analyzed with two types of vacation rules viz., single and multiple vacation with an indicator variable $\delta_{p}$ defined as follows
\begin{align*}
	\delta_{p} =\left\{\begin{array}{r@{\mskip\thickmuskip}l}
	0, ~~& \text{for single vacation,} \\
	1, ~~& \text{for multiple vacation}
	\end{array}\right.
\end{align*}
	It should be noted here that the server must decide  the pre-defined vacation rule before the service initiation. The results for the corresponding queue with single and multiple vacation can be obtained by substituting the appropriate value of the indicator function as defined above.
	
\begin{itemize}
		\item \emph{Single vacation rule:} After the completion of the service of a batch, if the server finds at least `$a$' customers waiting, it continues the service, else it leaves for a vacation of random length with pmf $v_{n}$, $n \geq 1$, pgf $V^*(z) = \sum_{n=1}^{\infty} v_{n}z^{n}$ and mean $E(V) = \bar{v} = V^{*(1)}(1)$. At the vacation termination epoch, if queue length is less than `$a$', the server waits until there are at least `$a$' customers in the queue.
		
\item \emph{Multiple vacation rule:} After the service completion of a batch if there is at least `$a$' customers in the queue it continues the service, else it leaves for a vacation of random length. After returning from the vacation if there is still not `$a$' customers have been accumulated, it leaves for another vacation of random time and the process continues unless there are `$a$' customers waiting in the queue and then the service starts. The pmf of vacation time of random length is defined as $v_{n}$, $n \geq 1$ with pgf $V^{*}(z) = \sum_{n=1}^{\infty}v_{n}z^{n}$ and mean $E(V) = \bar{v} = V^{*(1)}(1)$.
\end{itemize}
	
\item \emph{Traffic intensity}: The traffic intensity of the system is given by $\rho = \frac{\lambda \bar{g}}{b \mu_{b}} < 1$, which ensures the system stability.

\end{itemize}

\section{Governing system equations}\label{SE}
This section contains the governing equations of the model in the steady-state. To present the system equations we first define the state of the system at time $t$ as:
\begin{itemize}
\item  $N_{q}(k-) \equiv$ Number of packets/customers in the queue waiting to be transmitted,
\item $N_{s}(k-) \equiv$ Number of packets/customer with the server,
\item $U(k-) \equiv$ Remaining service time of a batch in service (if any),
\item $V(k-) \equiv$ Remaining vacation time of the server excluding the current vacation slot,
\item $\zeta(t-) \equiv$ State of the server defined as:
\begin{align*} \zeta(t-) = \left\{\begin{array}{r@{\mskip\thickmuskip}l}
0,~~& \text{when the server is in dormant state,}\\
1, ~~& \text{when the server is in vacation,}\\
2, ~~& \text{when the server is busy}
\end{array}\right.
\end{align*}
\end{itemize}
 Further, we define the following probabilities as:
\begin{eqnarray}
p_{n,0}(k-) &=& Pr\{N_{q}(k-)=n, N_{s}(k-)=0, \zeta(t-)=0\},\quad 0 \leq n \leq a-1 \nn\\
p_{n,r}(u, k-) &=& Pr\{N_{q}(k-)=n, N_{s}(k-)=r, U(k-)=u, \zeta(t-)=2\}, \quad
u \geq 1, n\geq 0, a\leq r \leq b\nn\\
Q_{n}(u,k-) &=& Pr\{N_{q}(k-)=n, V(k-)=u, \zeta(t-)=1\}, \quad u \geq 1,n \geq 0.\nn
\end{eqnarray}
As the model is analyzed in the steady state, we define the limiting probabilities as
\begin{eqnarray}
p_{n,0} &=& \lim\limits_{k- \to \infty} p_{n,0}(k-), \quad 0 \leq n \leq a-1,\nn\\
p_{n,r}(u) &=& \lim\limits_{k- \to \infty} p_{n,r}(u,k-), \quad u \geq 1,~ n \geq 0,~ a \leq r \leq b,\nn\\
Q_{n}(u) &=& \lim\limits_{k- \to \infty} Q_{n}(u,k-), \quad u \geq 1, ~n \geq 0\nn.
\end{eqnarray}
Observing the states of the system at the epochs $k-$ and $(k+1)-$ and using the supplementary variable technique (SVT), we obtain the following equations in the steady state
\begin{eqnarray}
p_{0,0} &=& \bigg[\bar{\lambda}p_{0,0} + \bar{\lambda}Q_{0}(1)\bigg]\left(1-{\delta}_{p}\right)\label{eq1}\\
p_{n,0} &=&\bigg[\bar{\lambda}p_{n,0} + \lambda\sum_{i=1}^{n}g_{i}p_{n-i,0}+ \bar{\lambda}Q_{n}(1) + \lambda\sum_{i=1}^{n}g_{i}Q_{n-i}(1)\bigg]\left(1-{\delta}_{p}\right),~~~ 1\leq n \leq a-1 \label{eq2}\\
p_{0,r}(u)&=&\bar{\lambda}p_{0,r}(u+1) + \bar{\lambda}\sum_{m=a}^{b}p_{r,m}(1)s_{r}(u) + \lambda\sum_{i=1}^{r}\sum_{m=a}^{b} g_{i} p_{r-i,m}(1)s_{r}(u) + \bar{\lambda}Q_{r}(1)s_{r}(u)\nonumber\\
&&~~~~~~~~~~~~~~~~~~ + \lambda\sum_{i=1}^{r} g_i Q_{r-i}(1)s_{r}(u)+ \left(1-{\delta}_{p}\right)\lambda\sum_{i=0}^{a-1}g_{r-i} p_{i,0}s_{r}(u),~~~ a \leq r \leq b \label{eq3}\\
p_{n,r}(u) &=& \bar{\lambda}p_{n,r}(u+1) + \lambda\sum_{i=1}^{n}g_{i} p_{n-i,r}(u+1), \quad n \geq 1,
\quad a \leq r \leq b-1\label{eq4}\\
p_{n,b}(u) &=& \bar{\lambda}p_{n,b}(u+1)+\lambda\sum_{i=1}^{n}g_{i} p_{n-i,b}(u+1) +\bar{\lambda}\sum_{m=a}^{b}p_{n+b,m}(1)s_{b}(u) + \lambda\sum_{i=1}^{n+b} \sum_{m=a}^{b} g_{i} p_{n+b-i,m}(1)s_{b}(u) \nn\\
&&+ \bar{\lambda}Q_{n+b}(1)s_{b}(u) + \lambda\sum_{i=1}^{n+b} g_{i} Q_{n+b-i}(1)s_{b}(u) + \left(1-{\delta}_{p}\right)\lambda\sum_{i=0}^{a-1}g_{n+b-i} p_{i,0}s_{b}(u),  ~~~ n\geq 1 \label{eq5}\eea
\bea
Q_{0}(u) &=& \bar{\lambda}Q_{0}(u+1) + \bar{\lambda} \left(\sum_{m=a}^{b}p_{0,m}(1) + {\delta_{p}} Q_{0}(1)\right)v(u)\label{eq6}\\
Q_{n}(u) &=& \bar{\lambda}Q_{n}(u+1) + \lambda \sum_{i=1}^{n}g_{i}Q_{n-i}(u+1)
+\bar{\lambda} \left(\sum_{m=a}^{b}p_{n,m}(1) + \delta_{p} Q_{n}(1)\right)v(u) \nonumber\\
&&~~~~~~~~~+\lambda\sum_{i=1}^{n} g_{i}\left(\sum_{m=a}^{b}p_{n-i,m}(1) + \delta_{p} Q_{n-i}(1)\right) v(u), \quad 1\leq n \leq a-1 \label{eq7}\\
Q_{n}(u) &=& \bar{\lambda}Q_{n}(u+1) + \lambda \sum_{i=1}^{n} g_{i} Q_{n-i}(u+1), \quad n \geq a \label{eq8}
\end{eqnarray}
Let us define the followings:
\begin{eqnarray}
p^*_{n,r}(z) &=& \sum_{u=1}^{\infty}p_{n,r}(u) z^{u}, \quad n \geq 0, \quad a \leq r \leq b\nonumber\\
Q^*_{n}(z) &=& \sum_{u=1}^{\infty} Q_{n}(u)z^{u}, \quad n \geq 0.\nonumber
\end{eqnarray}
Consequently, the following two results immediately follows from the above definitions. These will be used in later analysis.
\begin{eqnarray}
p_{n,r} &\equiv& p^*_{n,r}(1) = \sum_{u=1}^{\infty} p_{n,r}(u), \quad  \quad n\geq 0, a\leq r \leq b\nonumber\\
Q_{n} &\equiv& Q^*_{n}(1) = \sum_{u=1}^{\infty} Q_{n}(u), \quad n \geq 0.\nonumber
\end{eqnarray}
Since our foremost objective is to get the joint distributions from (\ref{eq1}) to (\ref{eq8}), multiplying (\ref{eq3}) to (\ref{eq8}) by $z^{u}$ and summing over $u$ from $1$ to $\infty$ we obtain
\begin{eqnarray}
\left(\frac{z-\bar{\lambda}}{z}\right)p^*_{0,r}(z) &=& \sum_{m=a}^{b}\left(\bar{\lambda}p_{r,m}+\lambda\sum_{i=1}^{r}g_{i}
p_{r-i,m}(1)\right)S^*_{r}(z) + \left(\bar{\lambda}Q_{r}(1) + \lambda\sum_{i=1}^{r}g_{i} Q_{r-i}(1)\right)S^*_{r}(z)\nn\\
&&~~~~~~~~~~~~~~~~~~~~~~~~~~~~~~+\left(1-\delta_{p}\right) \lambda\sum_{i=0}^{a-1}g_{r-i} p_{i,0} S^*_{r}(z)-\bar{\lambda}p_{0,r}(1), \quad a \leq r \leq b\label{eq9}\\
\left(\frac{z-\bar{\lambda}}{z}\right)p^*_{n,r}(z) &=&  \frac{\lambda}{z}\sum_{i=1}^{n}g_{i}p^*_{n-i,r}(z)-\bar{\lambda}p_{n,r}(1) - \lambda\sum_{i=1}^{n} g_i p_{n-i,r}(1) ,~~~n \geq 1,~~~a \leq r \leq b-1 \label{eq10}\\
\left(\frac{z-\bar{\lambda}}{z}\right)p^*_{n,b}(z) &=&  \frac{\lambda}{z}\sum_{i=1}^{n}g_{i }p^*_{n-i,b}(z) +\sum_{m=a}^{b}\left(\bar{\lambda}p_{n+b,m}(1)+\lambda
\sum_{i=1}^{n+b}g_{i}p_{n+b-i}(1)\right)S^*_{b}(z)\nn
\\&&+ \left(\bar{\lambda}Q_{n+b}(1)+\la\sum_{i=1}^{n+b}g_{i} Q_{n+b-i}(1)\right)S^*_{b}(z) +\left(1-\delta_{p}\right)\lambda\sum_{i=0}^{a-1}g_{n+b-i}p_{i,0} S^*_{b}(z)\nn\\
&&-\bar{\lambda}p_{n,b}(1) - \lambda\sum_{i=1}^{n}g_{i} p_{n-i,b}(1),\quad n \geq 1\label{eq11}\eea
\bea
\left(\frac{z-\bar{\lambda}}{z}\right)Q^*_{0}(z) &=&  \bar{\lambda} \sum_{r=a}^{b} p_{0,r}(1) V^*(z) + \delta_{p} \bar{\lambda} Q_{0}(1)V^*(z)-\bar{\lambda} Q_{0}(1)\label{eq12}\\
\left(\frac{z-\bar{\lambda}}{z}\right)Q^*_{n}(z) &=& \frac{\lambda}{z}\sum_{i=1}^{n}g_{i}Q^*_{n-i}(z)+\sum_{r=a}^{b}\biggl[\bar{\lambda}p_{n,r}(1) + \lambda\sum_{i=1}^{n}g_{i} p_{n-i,r}(1)\biggr]V^*(z) \nn \\
&&+\delta_{p}\bigg[\bar{\lambda}Q_{n}(1)+\lambda\sum_{i=1}^{n}g_{i} Q_{n-i}(1)\bigg]V^*(z)-\bar{\lambda}Q_{n}(1)-\lambda \sum_{i=1}^{n}g_{i} Q_{n-i}(1),\quad 1 \leq n \leq a-1\label{eq13}\\
\left(\frac{z-\bar{\lambda}}{z}\right)Q^*_{n}(z) &=&  \frac{\lambda}{z}\sum_{i=1}^{n}g_{i}Q^*_{n-i}(z)-\bar{\lambda}Q_{n}(1)-\lambda \sum_{i=1}^{n}g_{i}Q_{n-i}(1), \quad n \geq a \label{eq14}
\end{eqnarray}
\noindent Now using equations (\ref{eq1}), (\ref{eq2}) and (\ref{eq9})-(\ref{eq14}) we derive two results (represented as lemmas) which will be used later to obtain further results.
\begin{lem} \label{SElem1}
The probabilities $\left(p^+_{n,r},p_{n,r}(1)\right)$ and $\left(Q^+_{n},Q_{n}(1)\right)$ are connected by  the relation
	\begin{eqnarray}
	p^+_{0,r} &=& \tau^{-1}\left(\bar{\lambda} p_{0,r}(1)\right), \quad a \leq r \leq b \label{eq15}
	\\p^+_{n,r} &=& \tau^{-1}\left(\bar{\lambda}p_{n,r}(1) + \lambda\sum_{i=1}^{n} g_{i}p_{n-i,r}(1)\right), \quad n \geq 1, \quad a \leq r \leq b\label{eq16}\\
 Q^+_{0} &=& \tau^{-1}\left(\bar{\lambda}Q_{0}(1)\right)\label{eq17}\\
 Q^+_{n} &=& \tau^{-1}\left(\bar{\lambda}Q_{n}(1) + \lambda\sum_{i=1}^{n}g_{i} Q_{n-i}(1)\right) ,\quad n \geq 1\label{eq18}
	\end{eqnarray}
	where
	$\tau = \sum_{m=0}^{\infty}\sum_{r=a}^{b}p_{m,r}(1) + \sum_{m=0}^{\infty} Q_{m}(1)$.
\end{lem}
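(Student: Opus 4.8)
All four identities encode a single principle: the joint law at an embedded (service-completion or vacation-completion) epoch is obtained from the ``residual-time-one'' probabilities $p_{n,r}(1)$ and $Q_n(1)$ by accounting for the batch that may arrive in the same slot, and then renormalising by the total per-slot rate of embedded epochs. The plan is to turn this into a precise conditioning/rate argument, the in-slot transition structure having already been recorded in (\ref{eq1}), (\ref{eq2}) and (\ref{eq9})--(\ref{eq14}).

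First I would recall that $p^+_{n,r}$ (resp.\ $Q^+_n$) is the stationary probability that, at a randomly tagged service-completion (resp.\ vacation-completion) epoch, a batch of size $r$ has just departed and $n$ customers remain in the queue (resp.\ $n$ customers are present at the vacation termination). A service of a batch of size $r$ ends at slot boundary $k$ exactly when $N_s(k-)=r$ and $U(k-)=1$; in $(k-,k)$ either no batch arrives (probability $\bar\lambda$) or a batch of size $i\ge1$ arrives (probability $\lambda g_i$), and the queue left behind equals the pre-arrival queue length augmented by that batch. Summing over the pre-arrival queue length, the stationary per-slot rate of ``a batch of size $r$ departs leaving $n$ behind'' is therefore $\bar\lambda p_{n,r}(1)+\lambda\sum_{i=1}^{n}g_i p_{n-i,r}(1)$ for $n\ge1$ and $\bar\lambda p_{0,r}(1)$ for $n=0$ (no batch has size $0$); the analogous vacation-termination rate is $\bar\lambda Q_n(1)+\lambda\sum_{i=1}^{n}g_i Q_{n-i}(1)$, resp.\ $\bar\lambda Q_0(1)$. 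These are precisely the bracketed expressions in (\ref{eq15})--(\ref{eq18}), and they are the very source terms that drive $p^*_{0,r}(z)$, $p^*_{n,b}(z)$, $Q^*_0(z)$ and $Q^*_n(z)$ in (\ref{eq9}), (\ref{eq11})--(\ref{eq14}) --- which is where those equations, together with (\ref{eq1})--(\ref{eq2}) for the single-vacation dormant-state contributions, enter the argument.

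Next I would identify the normaliser. Summing the above rates over all $n\ge0$ (and over $a\le r\le b$ for the service part) and using $\sum_{m\ge1}g_m=1$ with a reindexing of the convolution, one gets $\sum_{n\ge0}\big(\bar\lambda p_{n,r}(1)+\lambda\sum_{i=1}^{n}g_i p_{n-i,r}(1)\big)=\sum_{n\ge0}p_{n,r}(1)$ and likewise $\sum_{n\ge0}\big(\bar\lambda Q_n(1)+\lambda\sum_{i=1}^{n}g_i Q_{n-i}(1)\big)=\sum_{n\ge0}Q_n(1)$, so the total per-slot rate of embedded epochs equals $\sum_{n\ge0}\sum_{r=a}^{b}p_{n,r}(1)+\sum_{n\ge0}Q_n(1)=\tau$. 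Since the service-completion and vacation-completion epochs are disjoint and together exhaust the embedded epochs, conditioning a tagged embedded epoch on its type (the discrete-time renewal-reward relation between time-stationary and event-stationary probabilities, i.e.\ event-stationary probability $=$ rate$/\tau$, legitimate because $0<\tau<\infty$ under $\rho<1$) yields $p^+_{0,r}=\tau^{-1}\big(\bar\lambda p_{0,r}(1)\big)$, $p^+_{n,r}=\tau^{-1}\big(\bar\lambda p_{n,r}(1)+\lambda\sum_{i=1}^{n}g_i p_{n-i,r}(1)\big)$ for $n\ge1$, and the two companion formulas for $Q^+$ --- that is, (\ref{eq15})--(\ref{eq18}).

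The only point needing care is the index bookkeeping: checking that (\ref{eq15}) and (\ref{eq17}) are the correct degenerate $n=0$ cases, that the single-vacation dormant-state terms carried by the factor $(1-\delta_p)$ in (\ref{eq1})--(\ref{eq2}) and (\ref{eq9}) are absorbed into the $p_{\cdot,\cdot}(1)$ totals without being double counted, and --- if one prefers the purely algebraic route of letting $z\to1$ in (\ref{eq9})--(\ref{eq14}) rather than the rate argument --- that the telescoping in $u$ of (\ref{eq3})--(\ref{eq8}) is invoked consistently. Everything else is routine rearrangement of geometric and convolution sums.
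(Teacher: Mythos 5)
Your proposal is correct and follows essentially the same route as the paper: the paper also derives (\ref{eq15})--(\ref{eq18}) by conditioning (it invokes Bayes' theorem) on the event that a service completion or vacation termination occurs in the slot, with the numerator $\bar\lambda p_{n,r}(1)+\lambda\sum_{i=1}^{n}g_i p_{n-i,r}(1)$ accounting for the possible in-slot batch arrival and the denominator $\tau=\sum_{m}\sum_{r}p_{m,r}(1)+\sum_{m}Q_{m}(1)$ normalising over all embedded epochs. Your additional check that the rates convolve back to $\tau$ is a harmless extra verification not present in the paper.
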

\noindent Equations (\ref{eq15}) and (\ref{eq16}) represents the relation between joint queue and server content distribution at service completion epoch with the joint distribution of queue and server content when the service is about to complete. Similar can be said about (\ref{eq17}) and (\ref{eq18}) in terms of vacation.
$\tau^{-1}$ gives us the mean of completion of service or termination of vacation per unit time, i.e, mean departure rate from the busy state or vacation state.
\begin{proof}[\textbf{Proof.}]
Here $p_{n,r}(1)$ denotes the joint probability that there are $n$ customers in the queue and $r$ with the server and remaining service time is just one slot.To find the relation between $p^+_{n,r}$ and $p_{n,r}(1)$ we apply Bayes' theorem, which gives
\begin{eqnarray}
p^+_{n,r} &=& Pr \{\; n \;\text{customers in the queue and}\; r \; \text{with the server just prior to the departure epoch}\nn\\
&&\text{and a batch arrives $\mid$ total number of customers in the queue just prior to the service}\nn\\
&&\text{completion epoch}\}, \quad n \geq 0, \quad a \leq r \leq b,\nn\\
&=&
\left\{\begin{array}{r@{\mskip\thickmuskip}l}
& \frac{1}{\tau}\left[\bar \la p_{0,r}(1)\right],\\
& \frac{1}{\tau} \left[\bar \la p_{n,r}(1)+\la \sum_{i=1}^{n} g_{i}p_{n-i,r}(1) \right], ~~n\geq 1
\end{array}\right.\nn\eea
where
\begin{eqnarray}
\tau &=& Pr\{\text{total number of customers in the queue just prior to the service completion}\nn
\\&&\text{or vacation termination epoch}\}
= \sum_{m=0}^{\infty}\sum_{r=a}^{b}p_{m,r}(1)+\sum_{m=0}^{\infty}Q_{m}(1). \nn
\end{eqnarray}
Similarly the results for vacations can be proved.
\end{proof}
\begin{lem}\label{SElem2}
The value of $\tau$ is given by
\begin{eqnarray}
\tau &=& \sum_{m=0}^{\infty}\sum_{r=a}^{b}p_{m,r}(1) + \sum_{m=0}^{\infty} Q_{m}(1)
= \frac{1-\left(1-\delta_{p}\right)\sum_{n=0}^{a-1}p_{n,0}}{\omega}\label{eq19} \\
\text{where}\nn\\
\omega &=& \sum_{n=0}^{a-1}\left[p^+_{n}E(V)+\delta_{p} Q^+_{n}E(V)+\left(1-\delta_{p}\right)Q^+_{n}\left\{\sum_{j=n}^{a-1}e_{j,n}\left(\sum_{i=a}^{b}g_{i-j}s_{i}+\sum_{i=b+1-j}^{\infty}g_{i}s_{b}\right)\right\}\right]
\nn\\&&+\sum_{n=a}^{b}\left(p^+_{n}+Q^+_{n}\right)s_{n} + \sum_{n=b+1}^{\infty}\left(p^+_{n}+Q^+_{n}\right)s_{b}\label{eq20}\\
\text{and}\nn\\
e_{n,i}&=&\sum_{j=i+1}^{n-1}e_{n,j}g_{j-i}+g_{n-i}, \quad i=0,1,\ldots,n-2 \quad \text{with}\quad e_{n,n-1}=g_{1}\quad \text{and} \quad e_{n,n}=1 \label{eq21}
\end{eqnarray}
\end{lem}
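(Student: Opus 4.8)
The plan is to combine the normalization condition with the generating-function relations (\ref{eq9})--(\ref{eq14}) to evaluate separately the total probability carried by the busy, vacation and dormant phases, and then to match the outcome against the proposed expression for $\omega$. Write $\Pi_r(z):=\sum_{n\ge0}p^*_{n,r}(z)$ for $a\le r\le b$ and $\Pi_Q(z):=\sum_{n\ge0}Q^*_n(z)$; by the definitions already recorded in the text, $\Pi_r(1)=\sum_n p_{n,r}$ and $\Pi_Q(1)=\sum_n Q_n$. The starting point is that the stationary probabilities exhaust the state space,
\begin{equation*}
\sum_{n=0}^{a-1}p_{n,0}+\sum_{r=a}^{b}\sum_{n\ge0}p_{n,r}+\sum_{n\ge0}Q_n=1 ,
\end{equation*}
so it suffices to express the second and third sums (and the first, when $\delta_p=0$) through $p^+_n,\,Q^+_n,\,E(V)$ and the mean service times $s_n$.

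First I would add (\ref{eq9}) to the sum over $n\ge1$ of (\ref{eq10}) for each $r$ with $a\le r\le b-1$, and add (\ref{eq9}) to the sum over $n\ge1$ of (\ref{eq11}) for $r=b$. Because $\sum_{m\ge1}g_m=1$, every discrete convolution collapses under the summation over $n$, and Lemma \ref{SElem1} lets me replace each block $\bar\lambda p_{n,r}(1)+\lambda\sum_{i=1}^{n}g_ip_{n-i,r}(1)$ by $\tau p^+_{n,r}$ (and the $Q$-blocks by $\tau Q^+_n$). The result has the shape $\frac{z-1}{z}\Pi_r(z)=c_r\,S^*_r(z)-B_r$ with $c_r,B_r$ constants; evaluating at $z=1$ forces $B_r=c_r$, and then multiplying by $\frac{z}{z-1}$ and applying L'Hôpital at $z=1$ (equivalently differentiating $(z-1)\Pi_r(z)=z\{c_rS^*_r(z)-B_r\}$ and setting $z=1$) yields $\sum_n p_{n,r}=c_r s_r$, where, after the re-indexing of the shift-by-$b$ terms in (\ref{eq11}), $c_r=\tau(p^+_r+Q^+_r)+(1-\delta_p)\lambda\sum_{i=0}^{a-1}g_{r-i}p_{i,0}$ for $a\le r\le b-1$ and $c_b=\tau\sum_{k\ge b}(p^+_k+Q^+_k)+(1-\delta_p)\lambda\sum_{i=0}^{a-1}p_{i,0}\sum_{l\ge b-i}g_l$. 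An identical manipulation of (\ref{eq12})--(\ref{eq14}) gives $\frac{z-1}{z}\Pi_Q(z)=\tau\big(\sum_{n=0}^{a-1}(p^+_n+\delta_pQ^+_n)\big)V^*(z)-C$, hence $\sum_n Q_n=\tau E(V)\sum_{n=0}^{a-1}(p^+_n+\delta_pQ^+_n)$.

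Next I would deal with the dormant mass. For $\delta_p=1$, equations (\ref{eq1})--(\ref{eq2}) force $p_{n,0}\equiv0$, so assume $\delta_p=0$; then, again via Lemma \ref{SElem1}, (\ref{eq1})--(\ref{eq2}) reduce to the renewal-type recursion $\lambda p_{n,0}=\lambda\sum_{i=1}^{n}g_ip_{n-i,0}+\tau Q^+_n$ with $\lambda p_{0,0}=\tau Q^+_0$. Observing that the array in (\ref{eq21}) is shift-invariant (it depends only on the difference of its indices, being the renewal sequence generated by $\{g_m\}$), one checks that $p_{n,0}=\tfrac{\tau}{\lambda}\sum_{k=0}^{n}e_{n,k}Q^+_k$ solves this recursion. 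Substituting this expression together with the two formulas from the previous step into the normalization condition, interchanging the finite summations over the indices, and performing a short re-indexing that fuses the $r=b$ contribution with the $a\le r\le b-1$ ones into the bracket $\sum_{r=a}^{b}g_{r-j}s_r+s_b\sum_{l\ge b+1-j}g_l$ appearing in (\ref{eq20}), one finds that $\sum_{r,n}p_{n,r}+\sum_n Q_n$ collapses to exactly $\tau\omega$. Therefore $\tau\omega=1-(1-\delta_p)\sum_{n=0}^{a-1}p_{n,0}$, which is (\ref{eq19}).

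I expect the bookkeeping around $r=b$ to be the main obstacle: equation (\ref{eq11}) carries the shifted terms $p_{n+b,m}(1)$, $Q_{n+b}(1)$ and $g_{n+b-i}$, and one must re-index carefully so that their aggregate contribution is $\tau\sum_{k\ge b+1}(p^+_k+Q^+_k)s_b$ plus the $g$-tail term, and then fuse it with the $n=b$ boundary piece coming from (\ref{eq9}); it is also here that the identity linking $\sum_{r=a}^{b-1}g_{r-j}s_r+s_b\sum_{l\ge b-j}g_l$ with the inner bracket of $\omega$ must be spotted. Two smaller technical points are (i) justifying the L'Hôpital step, i.e.\ that $\Pi_r$ and $\Pi_Q$ are finite/analytic at $z=1$, which is where the stability hypothesis $\rho<1$ enters, and (ii) verifying the shift-invariance of (\ref{eq21}) so that the closed form of $p_{n,0}$ is legitimate.
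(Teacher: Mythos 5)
Your proposal is correct and follows essentially the same route as the paper: both replace the blocks $\bar\lambda p_{n,r}(1)+\lambda\sum_i g_i p_{n-i,r}(1)$ by $\tau p^+_{n,r}$ via Lemma \ref{SElem1}, solve (\ref{eq1})--(\ref{eq2}) for $p_{n,0}$ through the renewal array (\ref{eq21}), sum the generating-function equations (\ref{eq9})--(\ref{eq14}) over $n$ and $r$, pass to $z\to1$ by L'H\^{o}pital to pull out $s_r$ and $E(V)$, and close with the normalization condition. The only difference is bookkeeping order (you evaluate phase-by-phase before aggregating, the paper aggregates first), and your per-$r$ constants $c_r$, $c_b$ do reassemble into the stated $\omega$.
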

\begin{proof}[\textbf{Proof.}]
For single vacation substituting $\delta_{p}=0$ in (\ref{eq1}) and (\ref{eq2}), we obtain
\begin{eqnarray}
\lambda p_{n,0} = \bar{\lambda}e_{n,0}Q_{0}(1) + \sum_{m=1}^{n}e_{n,m}\bigg\{\bar{\lambda}Q_{m}(1) + \lambda \sum_{i=1}^{m}g_{i}Q_{m-i}(1)\bigg\}, \quad 1\leq n \leq a-1\label{eq22}
\end{eqnarray}
Using (\ref{eq22}) in (\ref{eq9}) and then summing  $r$ from $a$ to $b$, and $n$ from $0$ to $\infty$ and in the equations (\ref{eq9}) - (\ref{eq14}), we get
\begin{small}
\begin{eqnarray}
&&\left(\dfrac{z-1}{z}\right)\sum_{n=0}^{\infty}\left\{\sum_{r=a}^{b}p^*_{n,r}(z) + Q^*_{n}(z)\right\}\nn\\
&=& \left[\bar{\lambda}Q_{0}(1) + \sum_{n=1}^{a-1}\left\{\bar{\lambda}Q_{n}(1) + \lambda\sum_{i=1}^{n}g_{i} Q_{n-i}(1)\right\}\right]\delta_{p}V^*(z) \nonumber\\ &&+ \sum_{r=a}^{b}\left[\bar{\lambda}p_{0,r} + \sum_{n=1}^{a-1}\left\{\bar{\lambda}p_{n,r}(1) + \lambda\sum_{i=1}^{n}g_{i} p_{n-i,r}(1)\right\}V^*(z) +\sum_{n=a}^{b}\biggl\{\bar{\lambda}p_{n,r}(1) + \lambda\sum_{i=1}^{n}g_{i} p_{n-i,r}(1)\biggr\}S^*_{n}(z)\right] \nonumber\\ &&+\sum_{n=b+1}^{\infty}\left[\sum_{r=a}^{b}\left\{\bar{\lambda}p_{n,r}(1) + \lambda\sum_{i=1}^{n} g_{i} p_{n-i}(1)\right\}+\bar{\lambda}Q_{n}(1) + \lambda\sum_{i=1}^{n}g_{i} Q_{n-i}(1)\right]S^*_{b}(z)\nn\\
&&+\left(1-\delta_{p}\right)\sum_{i=0}^{a-1}\left(\sum_{r=a}^{b}g_{r-i}S^*_{r}(z)y^{r} +\sum_{n=1}^{\infty}g_{n+b-i}S^*_{b}(z)x^{n}y^{b}\right)\left(\bar{\lambda}e_{i,0}Q_{0}(1)+ \sum_{m=1}^{i}e_{i,m}\biggl\{\bar{\lambda}Q_{m}(1)+\lambda \sum_{j=1}^{m}g_{j}Q_{m-j}(1)\biggr\}\right)\nn\\
&& -\sum_{n=0}^{\infty}\sum_{r=a}^{b}p_{n,r}(1) - \sum_{n=0}^{\infty}Q_{n}(1) \nn	
\end{eqnarray}
\end{small}
Now by taking limit $z \rightarrow 1$ in both sides of above equation, by using L'Hospital's rule, and the normalizing condition $\left(1-\delta_{p}\right)\sum_{n=0}^{a-1}p_{n,0} + \sum_{n=0}^{\infty}\sum_{r=a}^{b}p_{n,r} + \sum_{n=0}^{\infty}Q_{n} = 1$ we obtain the desired result.
\end{proof}

\section{Joint distribution of queue length and number in a served batch at departure epoch}\label{JDD}
The primary intention of this section is to provide complete queue and server content  distribution together. Our foremost focal is to derive the bivariate pgf of the joint distribution at service completion epoch of a batch. Our aim is also to extract those joint distributions in a simple and elegant way. \\
We first define the following pgfs as:
\begin{eqnarray}
P(z, x, y) &=& \sum_{n=0}^{\infty}\sum_{r=a}^{b}p^*_{n,r}(z)x^{n}y^{r}, \quad  |x| \leq 1, \quad  |y| \leq 1\label{eq23}\\
P^+(x, y) &=& \sum_{n=0}^{\infty}\sum_{r=a}^{b}p^+_{n,r}x^{n}y^{r}, \quad |x| \leq 1, \quad |y| \leq 1\label{eq24}\\
P^+(x, 1) &=& \sum_{n=0}^{\infty}\sum_{r=a}^{b}p^+_{n,r}x^{n}=\sum_{n=0}^{\infty}p^+_{n}x^{n} = P^+(x), \quad |x| \leq 1\label{eq25}\\
Q^+(x) &=& \sum_{n=0}^{\infty}Q^+_{n}x^{n}, \quad |x| \leq 1\label{eq26}
\end{eqnarray}
Since our major concern is to find the bivariate pgf, multiplying (\ref{eq9}) - (\ref{eq11}) by appropriate powers of $x$ and $y$, summing over $n$ from $0$ to $\infty$ and $r$ from $a$ to $b$, and using (\ref{eq23}), we get
\begin{small}
\begin{eqnarray}
&&\left\{\dfrac{z-\left(\bar{\lambda}+\lambda G(x)\right)}{z}\right\} P(z, x, y)\nn\\
 &=& \sum_{n=a}^{b}\left[\sum_{r=a}^{b}\biggl\{\bar{\lambda}p_{n,r}(1) + \lambda\sum_{i=1}^{n}g_{i} p_{n-i,r}(1)\biggr\}+\left\{\bar{\lambda}Q_{n}(1)+\lambda\sum_{i=1}^{n} g_{i} Q_{n-i}(1)\right\}\right]S^*_{n}(z) y^{n} \nonumber\\
 &&+ \sum_{n=b+1}^{\infty}\left[\sum_{r=a}^{b}\biggl\{\bar{\lambda}p_{n,r}(1) + \lambda \sum_{i=1}^{n} g_{i}p_{n-i,r}(1)\biggr\}+ \biggl\{\bar{\lambda}Q_{n}(1) + \lambda \sum_{i=1}^{n}g_{i}Q_{n-i}(1)\biggr\}\right]S^*_{b}(z)x^{n-b}y^{b}\nn\\
 &&+\left(1-\delta_{p}\right)\sum_{i=0}^{a-1}\bigg[\bar{\lambda}e_{i,0}Q_{0}(1)+ \sum_{m=1}^{i}e_{i,m}\biggl\{\bar{\lambda}Q_{m}(1)+\lambda \sum_{j=1}^{m}g_{j}Q_{m-j}(1)\biggr\}\bigg]\left(\sum_{r=a}^{b}g_{r-i}S^*_{r}(z)y^{r} +\sum_{n=1}^{\infty}g_{n+b-i}S^*_{b}(z)x^{n}y^{b}\right)\nn\\
 &&-\bar{\lambda}\sum_{r=a}^{b}p_{0,r}(1)y^{r} -\sum_{n=1}^{\infty}\sum_{r=a}^{b}\biggl\{\bar{\lambda}p_{n,r}(1) + \lambda \sum_{i=1}^{n}g_{i} p_{n-i,r}(1)\biggr\}x^{n}y^{r} \label{eq27}
\end{eqnarray}
\end{small}
Now substituting $z= \bar{\lambda}+\lambda G(x)$ in (\ref{eq27}) and using (\ref{eq15}) - (\ref{eq18}), and (\ref{eq23}) we obtain
\begin{eqnarray}
P^+(x, y) &=& \left(1-\delta_{p}\right)\sum_{n=0}^{a-1}Q^+_{n}\left[\sum_{j=n}^{a-1}e_{j,n}\left(
\sum_{i=a}^{b}g_{i-j}K^{(i)}(x)y^{i}+\sum_{i=b+1-j}^{\infty}g_{i}x^{i+j-b}K^{(b)}(x)y^{b}\right)\right]\nn\\
&&+\sum_{n=a}^{b}\left(p^+_{n}+Q^+_{n}\right)K^{(n)}(x)y^{n}+
\sum_{n=b+1}^{\infty}\left(p^+_{n}+Q^+_{n}\right)K^{(b)}(x)x^{n-b}y^{b}\label{eq28}
\end{eqnarray}
where $K^{(r)}(x)=S^*_{r}(\bar{\lambda}+\lambda G(x))$ is the pgf of $k^{(r)}_{j}$,
$\left(a \leq r \leq b, ~ j \geq 0 \right)$ and\\
$k^{(r)}_{j} =  Pr\{j\; \text{arrivals during the service time of a batch of size} \; r\}$.\\
Now multiplying (\ref{eq12}) to (\ref{eq14}) with appropriate powers of $x$ and summing over $n$ from $0$ to $\infty$ we get
\begin{eqnarray}
\biggl\{\dfrac{z-\left(\bar{\lambda}+\lambda G(x)\right)}{z}\biggr\}
\sum_{n=0}^{\infty}Q^*_{n}(z)x^{n} &=& \sum_{m=a}^{b}\biggl\{\bar \la p_{0,m}(1)+\sum_{n=1}^{a-1}\biggl(\bar{\lambda}p_{n,m}(1)+\lambda \sum_{i=1}^{n}g_{i}p_{n-i,m}(1)\biggr)\biggr\}x^{n}V^{*}(z)\nonumber
\\&&+\delta_{p}\biggl\{\bar{\lambda}Q_{0}(1) + \sum_{n=1}^{a-1}\bigg(\bar{\lambda}Q_{n}(1) + \lambda \sum_{i=1}^{n}g_{i}Q_{n-i}(1)\bigg)\biggr\}x^{n}V^*(z)\nn\\
&&-\bar{\lambda}Q_{0}(1)-\sum_{n=1}^{\infty}\biggl\{\bar{\lambda}Q_{n}(1)+\lambda \sum_{i=1}^{n}g_{i}Q_{n-i}(1)\biggr\}x^{n}\label{eq29}
\end{eqnarray}
Putting $z=\bar{\lambda}+\lambda G(x)$ in (\ref{eq29}) and using (\ref{eq15}) - (\ref{eq18}), (\ref{eq26}), we get
\begin{eqnarray}
Q^{+}(x) = \sum_{n=0}^{a-1}p^+_{n}x^{n} H(x) + \delta_{p} \sum_{n=0}^{a-1}Q^+_{n}x^{n}H(x)\label{eq30}
\end{eqnarray}
where $H(x)= \sum_{j=0}^{\infty}h_{j}x^{j}= V^{*}(\bar{\lambda}+\lambda G(x))$ is the pgf of $h_{j}$ and\\
$h_{j} =  Pr\{j\; \text{arrivals during the vacation time of a server}\}$\\
Now substituting $y=1$ in (\ref{eq28}) and using (\ref{eq25}) and (\ref{eq30}) and after some algebraic manipulation we obtain\\
\begin{eqnarray}
P^+(x)= \frac{\begin{aligned}
\sum_{n=0}^{a-1}p^+_{n}x^{n}\biggl\{H(x)-1\biggr\}K^{(b)}(x) + \sum_{n=0}^{a-1}
Q^+_{n}\left[x^{n}\biggl\{\delta_{p}H(x)-1\biggr\}K^{(b)}(x)\right.\\
\left.+x^{b}\left(1-\delta_{p}\right)\left\{\sum_{j=n}^{a-1}e_{j,n}\left(
\sum_{i=a}^{b}g_{i-j}K^{(i)}(x)+\sum_{i=b+1-j}^{\infty}g_{i}x^{i+j-b}K^{(b)}(x)
\right)\right\}\right]\\
+\sum_{n=a}^{b-1}\bigg(p^+_{n}+Q^+_{n}\bigg)\biggl\{x^{b}K^{(n)}(x)-x^{n}K^{(b)}(x)
\biggr\}\end{aligned}}{x^b-K^{(b)}(x)} \label{eq31}
\end{eqnarray}
where $e_{j,n}$ is given in (\ref{eq21})\\
The expression represented by (\ref{eq31}) is the pgf of only queue length distribution which is essential to calculate the tail distribution as it is a powerful tool for computing cell loss ratio in ATM switching network. The result is not available in literature to the best of authors' knowledge. Thus we hope that this leads to a new contribution in the field of queueing literature.\\
It should be noted here that one can not find the server content from (\ref{eq31}). Now looking back into (\ref{eq27}), using (\ref{eq29}), (\ref{eq31}) and
 after some algebraic simplification, we get
\begin{eqnarray}
P^+(x,y)= \frac{\begin{aligned}
\sum_{n=0}^{a-1}p^+_{n}x^{n}\biggl\{H(x)-1\biggr\}y^{b}K^{(b)}(x) +
\sum_{n=0}^{a-1}
Q^+_{n}\left[x^{n}\biggl\{\delta_{p}H(x)-1\biggr\}y^{b}K^{(b)}(x)\right.\\
\left.+\left(1-\delta_{p}\right)\left\{\sum_{j=n}^{a-1}e_{j,n}\bigg(
\sum_{i=a}^{b}g_{i-j}K^{(i)}(x)\biggr\{\left(x^{b}-K^{(b)}(x)\right)y^{i}+y^{b}K^{(b)}(x)\biggr\}\right.\right.\\\left.\left.+\sum_{i=b+1-j}^{\infty}g_{i}x^{i+j}y^{b}K^{(b)}(x)
\bigg)\right\}\right]\\
+\sum_{n=a}^{b-1}\left(p^+_{n}+Q^+_{n}\right)\left[ K^{(b)}(x)K^{(n)}(x) \left(y^b-y^n\right)+\left(K^{(n)}(x)x^b y^n-K^{(b)}(x)x^n y^b\right)\right]
\end{aligned}}{x^b-K^{(b)}(x)} \label{eq32}
\end{eqnarray}
The above expression represents the bivariate pgf of queue and server content distribution at service completion epoch, which is the key ingredient of our analysis. No such result is available in literature so far to the best of authors' knowledge. The joint distribution can be found by inverting the bivariate pgf which discussed in later analysis.
\subsection{Steady-state conditions and determination of unknowns in the numerator of (\ref{eq32})}
\hspace*{0.3cm} Although the stability of the concerned queue is assumed in model description,it can also be proved using the stability condition given by Abonikov and Dukhovny. We conclude that the corresponding Markov chain is ergodic if and only if $\frac{d}{dx}K^{(b)}(x)\mid_{x=1} \textless \,b$. Consequently, the steady state distribution exists if $\frac{\lambda \bar{g}}{\mu_{b}}\, \textless \, b$, i.e $\frac{\lambda \bar{g}}{b \mu_{b}} = \rho \, \textless \, 1$.\\
\hspace*{0.3cm}Before extracting the joint probabilities from bi-variate pgf represented by (\ref{eq32}), we need to find the unknown quantities $p^+_{n}$, $\left(0 \leq n \leq b-1\right)$ and $Q^+_{n}$, $\left(0 \leq n \leq a-1 \right)$, appearing in the numerator. Since both (\ref{eq31}) and (\ref{eq32}) contains same unknown quantities, we use (\ref{eq31}) to find the unknowns, without any loss of generality. The procedure of finding these unknowns although, available in literature, we describe it briefly for sake of completeness. We have to determine $a+b$ unknowns here. We use the result below given in the form of lemma,which represents between $Q^+_{n}$ in terms of $p^+_{n}$, that leads us to $b$ unknowns. From Rouch$\acute{\mbox{e}}$'s theorem it is known that $D(x)= x^{b} - K^{(b)}(x)$, (say) has exactly $b$ zeroes in the unit circle $|x| \leq 1$. Using these zeroes and the normalizing condition, we obtain $b$ linear simultaneous equations and hence solving them, we get the unknowns.
\begin{lem}
The relation between $p^+_{n}$ and $Q^+_{n}$ are given by:
\begin{eqnarray}
Q^+_{n} &=& \sum_{i=0}^{n}\xi_{i}p^+_{n-i}, \quad n=0,1,2,\ldots,a-1 \nn\\
\xi_{0} &=& \frac{h_{0}}{1-h_{0}}\nn\\
\xi_{n} &=& \frac{1}{1-h_{0}}\left[h_{n} + \sum_{i=1}^{n}h_{i}\xi_{n-i}\right] \quad n=1,2,\ldots,a-1\nn
\end{eqnarray}
\end{lem}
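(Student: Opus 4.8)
The plan is to read the relation off directly from equation (\ref{eq30}), which already expresses the full generating function $Q^+(x)=\sum_{n\ge 0}Q^+_n x^n$ in terms of the unknowns $p^+_n,Q^+_n$ $(0\le n\le a-1)$ and the known series $H(x)=\sum_{j\ge 0}h_j x^j$. Writing $P(x):=\sum_{n=0}^{a-1}p^+_n x^n$ and $R(x):=\sum_{n=0}^{a-1}Q^+_n x^n$, equation (\ref{eq30}) reads $Q^+(x)=P(x)H(x)+\delta_p R(x)H(x)$. Since the coefficients of $1,x,\dots,x^{a-1}$ in $Q^+(x)$ are precisely $Q^+_0,\dots,Q^+_{a-1}$, i.e.\ the coefficients of $R(x)$, reducing this identity modulo $x^{a}$ gives the congruence
\[
R(x)\;\equiv\;P(x)H(x)+\delta_p\,R(x)H(x)\pmod{x^{a}}.
\]
For single vacation ($\delta_p=0$) this is already the plain convolution $Q^+_n=\sum_{i=0}^{n}h_i p^+_{n-i}$, so the closed form in the statement corresponds to the multiple–vacation case $\delta_p=1$, which I treat below; there the congruence rearranges to $R(x)\bigl(1-H(x)\bigr)\equiv P(x)H(x)\pmod{x^{a}}$.

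Next I would invert $1-H(x)$ as a formal power series. This is legitimate because $h_0=H(0)=V^{*}(\bar{\lambda})\le\bar{\lambda}<1$ (a pgf evaluated at a point of $(0,1)$, using $G(0)=0$), so $1-H(x)$ has invertible constant term $1-h_0$. Multiplying the congruence by $\bigl(1-H(x)\bigr)^{-1}$ yields
\[
R(x)\;\equiv\;\frac{H(x)}{1-H(x)}\,P(x)\pmod{x^{a}} .
\]
It then remains to identify $\Xi(x):=\sum_{j\ge 0}\xi_j x^{j}$ with $H(x)/\bigl(1-H(x)\bigr)$. I would check that the recursion in the statement is exactly the coefficient form of $\Xi(x)\bigl(1-H(x)\bigr)=H(x)$: multiplying the defining formulas for $\xi_0$ and $\xi_n$ by $1-h_0$ gives $\xi_n-\sum_{i=0}^{n}h_i\xi_{n-i}=h_n$ for every $n\ge 0$, which is precisely $[x^{n}]\bigl(\Xi(x)-H(x)\Xi(x)\bigr)=[x^{n}]H(x)$. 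Extracting the coefficient of $x^{n}$ from $R(x)\equiv\Xi(x)P(x)$ for $0\le n\le a-1$ then gives $Q^+_n=\sum_{i=0}^{n}\xi_i p^+_{n-i}$. Equivalently one can skip generating functions altogether and prove this by induction on $n$ straight from the congruence, the recursion for $\xi_n$ being exactly what the induction step forces.

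There is no real obstacle; the argument is entirely coefficient bookkeeping. The only two points that need a word of care are the passage to the congruence modulo $x^{a}$ — harmless, since the degree-$\ge a$ tail of $Q^+(x)$ cannot affect $Q^+_0,\dots,Q^+_{a-1}$ — and the formal invertibility of $1-H(x)$, guaranteed by $h_0<1$. Expressing $\xi_n$ as the Neumann-series coefficients of $H(x)/(1-H(x))$ is what makes the statement self-contained and directly usable in the next step, where the remaining $b$ unknowns $p^+_0,\dots,p^+_{b-1}$ are determined from the $b$ zeros of $x^{b}-K^{(b)}(x)$ in $|x|\le 1$ together with the normalization condition.
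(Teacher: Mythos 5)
Your proposal is correct, and in fact the paper states this lemma without any proof at all, so your coefficient-extraction argument from equation (\ref{eq30}) supplies exactly the missing derivation: reduce $Q^+(x)=P(x)H(x)+\delta_p R(x)H(x)$ modulo $x^a$, use that the first $a$ coefficients of $Q^+(x)$ are those of $R(x)$, invert $1-H(x)$ (legitimate since $h_0=V^*(\bar\lambda)<1$), and identify the $\xi_n$ recursion with the coefficients of $H(x)/(1-H(x))$. Your observation that the recursion as printed is really the multiple-vacation case $\delta_p=1$, while for single vacation the relation degenerates to the plain convolution $Q^+_n=\sum_{i=0}^{n}h_i\,p^+_{n-i}$ (equivalently $\xi_n=h_n$), is a genuine and useful clarification of a point the paper glosses over; a fully general statement would carry $\delta_p$ inside the recursion, e.g.\ $\xi_0=h_0/(1-\delta_p h_0)$ and $\xi_n=\bigl(h_n+\delta_p\sum_{i=1}^{n}h_i\xi_{n-i}\bigr)/(1-\delta_p h_0)$. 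No gaps to report.
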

\subsection{Extracting probabilities from the known bivariate pgf}
\hspace*{0.3cm} Now we have the completely known bivariate pgf, the key ingredient of this analysis. In this section we now proceed to extract the joint probabilities using partial fraction technique. First we accumulate the the coefficients of $y^i$, $\left(a \leq r \leq b\right)$ from the bi-variate pgf (\ref{eq32}) and are given below.
\\
Coefficient of $y^i$, $a \leq i \leq b-1$:
\begin{eqnarray}
\sum_{n=0}^{\infty}p^+_{n,i}x^{n} = \left(1-\delta_{p}\right)\sum_{n=0}^{a-1}Q^+_{n} \sum_{j=n}^{a-1}e_{j,n}g_{i-j}K^{(i)}(x) + \left(p^+_{i}+Q^+_{i}\right)K^{(i)}(x) \label{eq33}
\end{eqnarray}
Coefficient of $y^b$:
\begin{eqnarray}
\sum_{n=0}^{\infty}p^+_{n,b}x^{n} &=& \frac{1}{x^{b}-K^{(b)}(x)}\left[\sum_{n=0}^{a-1} p^+_{n}x^{n}\bigg(H(x)-1\bigg) K^{(b)}(x)+\left(1-\delta_{p}\right)K^{(b)}(x)\right.\nn\\
&&\left.\sum_{n=0}^{a-1}Q^+_{n}\sum_{j=n}^{a-1}e_{j,n}\left(g_{b-j}K^{(i)}(x) +\sum_{i=b+1-j}^{\infty}g_{i} x^{i+j}K^{(b)}(x)\right)\right]\nn\\
&&+\left(1-\delta_{p}\right)\sum_{n=0}^{a-1}Q^+_{n}\sum_{j=n}^{a-1}e_{j,n}g_{b-j} K^{(b)}(x) + \sum_{n=a}^{b-1}\left(p^+_{n}+Q^+_{n}\right)x^{n-b}K^{(b)}(x) \label{eq34}
\end{eqnarray}
Collecting the coefficients of $x^n$ from both sides of (\ref{eq33}) and we obtain the joint distribution as:
\begin{eqnarray}
p^+_{n,i} = \bigg[\left(1-\delta_{p}\right)\sum_{n=0}^{a-1}Q^+_{n}\sum_{j=n}^{a-1}e_{j,n}g_{i-j}  + \left(p^+_{i}+Q^+_{i}\right)\bigg]k_n^{(i)}, \quad n \geq 0, \, a \leq i \leq b-1
\label{eq35}
\end{eqnarray}
Now we are left only with $p^+_{n,b}$, for which we need to invert the right hand side of (\ref{eq34}). If service and vacation time distribution are known then right hand side of  (\ref{eq35}) is completely known function of $x$. The inversion process is described in subsequent analysis.\\
\hspace*{0.3cm} After substituting $K^{(r)}(x)=S^*_r(\bar \la + \la G(x)),~a\leq r \leq b,$ and $H(x)=V^*(\bar \la + \la G(x))$ in the expression (\ref{eq34}),
let us denote numerator of (\ref{eq34}) as $\Lambda(x)$ and denominator as $D(x)$. The degrees of $\Lambda(x)$ and $D(x)$ depend on the distributions of service/vacation time and service threshold value `$a$' and maximum capacity `$b$'. Let the degree of numerator be $L_1$ and that of denominator be $M_1$, respectively. Now, we obtain $p_{n,b}^+$ in terms of roots of $D(x)=0$ by discussing the following cases:

\begin{itemize}
	\item \textbf{When all the zeroes of $D(x)$ in $|x|>1$ are distinct}\\
	Let us assume $\alpha_{1},\alpha_{2},\ldots,\alpha_{M_1}$ to be the roots of $D(x)=0$, out of which $M_1-b$ are the distinct roots in $|x| > 1$, as the degree of $D(x)$ is $M_1$ and $b$ insides roots are cancelled out with the roots of the numerator. It is also assumed that $D(x)=0$ has $b$ simple roots inside the unit circle. \\
	~~\\
	\emph{Case 1}: $L_1\geq M_1$\\
	\hspace*{0.5cm}Applying the partial-fraction expansion, the rational function $\sum_{n=0}^{\infty}p_{n,b}^+x^n$ can be uniquely written as
	\bea \sum_{n=0}^{\infty}p_{n,b}^+x^n=\sum_{i=0}^{L_1-M_1}\tau_{i} x^i +\sum_{k=1}^{M_1-b}\frac{c_{k}}{\alpha_{k}-x},\label{eq36}\eea
	for some constants $\tau_{i}$ and $c_{k}$'s. The first sum is the result of the division of the polynomial $\Lambda(x)$ by $D(x)$ and the constants $\tau_{i}$ are the coefficients of the resulting quotient. Using the residue theorem, we have
	$ c_{k}=-\frac{\Lambda(\alpha_{k})}{D^{\prime}(\alpha_{k})},\quad k=1,2,\ldots,M_1-b. $\\
	Now, collecting the coefficient of $x^n$ from both the sides of (\ref{eq36}), we have
	\bea p_{n,b}^+= \left\{\begin{array}{r@{\mskip\thickmuskip}l}
		&\tau_{n}+\sum\limits_{k=1}^{M_1-b}\frac{c_{k}}{\alpha^{n+1}_{k}},\quad 0\leq n\leq L_1-M_1,\\
		&\\
		& \sum\limits_{k=1}^{M_1-b}\frac{c_{k}}{\alpha^{n+1}_{k}},\quad n> L_1-M_1~.\end{array}\right.\label{eq37}\eea
	\emph{Case 2}: $L_1< M_1$\\
	Using partial-fraction technique on $\sum_{n=0}^{\infty}p_{n,b}^+ x^n$, we have
	\bea \sum_{n=0}^{\infty}p_{n,b}^+x^n=\sum_{k=1}^{M_1-b}\frac{c_{k}}{\alpha_{k}-x},\label{eq38}\eea
	where $c_{k}=-\frac{\Lambda(\alpha_{k})}{D^{\prime}(\alpha_{k})},\quad k=1,2,\ldots,M_1-b. $\\
	Now, collecting the coefficient of $x^n$ from both the sides of (\ref{eq38}), we obtain
	\bea p_{n,b}^+ &=& \sum\limits_{k=1}^{M_1-b}\frac{c_{k}}{\alpha^{n+1}_{k}},\quad n\geq 0. \label{eq39}\eea
	
	\item \textbf{When some zeroes of $D(x)$ in $|x|>1$ are repeated}\\
	The denominator $D(x)$ may have some multiple zeroes with absolute value greater than one. Let $D(x)$ has total $m$ multiple zeroes, say $\beta_{1},\beta_{2},\ldots,\beta_{m}$ with multiplicity $\pi_{1},\pi_{2},\ldots,\pi_{m}$, respectively. Further it is clear that $D(x)$ has total $(M_1-b-\chi)$ distinct zeroes, where $ \chi=\sum_{i=1}^{m}\pi_{i}$, say, $\alpha_{1},\alpha_{2},\ldots,\alpha_{M_1-b-\chi}$. It is also assumed that $D(x)$ has $b$ simple zeroes inside the unit circle.\\
	~~\\
	\emph{Case 1}: $L_1\geq M_1$\\
	Applying the partial-fraction method, we can uniquely write $\sum_{n=0}^{\infty}p_{n,b}^+x^n,$ as
	\bea \hspace*{-1.0cm}\sum_{n=0}^{\infty}p_{n,b}^+x^n=\sum_{i=0}^{L_1-M_1}\tau_{i} x^i+\sum_{k=1}^{M_1-b-\chi}\frac{c_{k} }{\alpha_{k}-x}+\sum_{\nu=1}^{m}\sum_{i=1}^{\pi_{\nu}}\frac{\zeta_{\nu,i}}{(\beta_{\nu}-x)^{\pi_{\nu}-i+1}} \label{eq40}. \eea
	Using residue theorem, we obtain
	\begin{small}
		\bea \hspace*{-1.2cm}c_{k}&=&-\frac{\Lambda(\alpha_{k})}{D^{\prime}(\alpha_{k})},\quad k=1,2,\ldots,M_1-b-m, \nn \\
		\hspace*{-1.2cm}\zeta_{\nu,i}&=&
		\frac{1}{(\pi_{\nu}-i)!}~\lim_{x \rightarrow \beta_{\nu}}~\frac{d^{(\pi_{\nu}-i)}}{dx^{(\pi_{\nu}-i)}}\left[ \frac{(\beta_{\nu}-x)^{\pi_{\nu}}\Lambda(x)}{D(x)}\right],~ \nu=1,2,\ldots,m, ~ i=1,2,\ldots,\pi_{\nu}.\nn \eea
	\end{small}
	Now collecting the coefficient of $x^n$ from both the sides of (\ref{eq40}), we have the distribution $p_{n,b}^+~,~n\geq 0$ as:
	\begin{small}
		\bea \hspace*{-1cm}p_{n,b}^+=\displaystyle \left\{\begin{array}{r@{\mskip\thickmuskip}l}
			&\tau_{n}+\sum\limits_{k=1}^{M_1-b-\chi}\frac{c_{k}}{\alpha^{n+1}_{k}}+\displaystyle
			\sum_{\nu=1}^{m}\sum_{i=1}^{\pi_{\nu}}
			\binom{\pi_{\nu}+n-i}{\pi_{\nu}-i}\frac{\zeta_{\nu, i}}{\beta_{\nu}^{\pi_{\nu}+n+1-i}},~~ 0\leq n\leq L_1-M_1,\\
			& \\
			& \sum\limits_{k=1}^{M_1-b-\chi}\frac{c_{k}}{\alpha^{n+1}_{k}}+\displaystyle
			\sum_{\nu=1}^{m}\sum_{i=1}^{\pi_{\nu}}
			\binom{\pi_{\nu}+n-i}{\pi_{\nu}-i}\frac{\zeta_{\nu, i}}{\beta_{\nu}^{\pi_{\nu}+n+1-i}},~~ n> L_1-M_1.\end{array}\right. \label{eq41}\eea
	\end{small}
	~~\\
	\emph{Case 2}: $L_1< M_1$\\
	Here, in partial-fraction, we omit only  the first summation term of the right hand side of (\ref{eq41}). Then collecting the coefficients of $x^n$, we obtain $p_{n,b}^+$, which are given by
	\begin{small}
		\bea \hspace*{-1cm}p_{n,b}^+=\sum\limits_{k=1}^{M_1-b-\chi}\frac{c_{k}}{\alpha^{n+1}_{k}}+\sum_{\nu=1}^{m}\sum_{i=1}^{\pi_{\nu}}
		\binom{\pi_{\nu}+n-i}{\pi_{\nu}-i}\frac{\zeta_{\nu, i}}{\beta_{\nu}^{\pi_{\nu}+n+1-i}},~ n\geq 0.\label{eq42}\eea
	\end{small}
\end{itemize}
This completes the extraction of the joint probabilities $p_{n,b}^+$.
\begin{remark}
Sometimes it may be required to know the probabilities at vacation termination epoch i.e., $Q^+_{n}$, $n \geq 0$, which can be easily extracted from $Q^+(x)$ by comparing the coefficients of $x^n$ and are given as:

 \begin{align} Q^+_{n} = \left\{\begin{array}{r@{\mskip\thickmuskip}l}
\sum_{i=0}^{n} \left(p_{i}^+ + \delta_{p} Q_{i}^+\right),~~& 1 \leq n \leq a-1,\\
\sum_{i=0}^{a-1} \left(p_{i}^+ + \delta_{p} Q_{i}^+\right), ~~& n \geq a.
\end{array}\right.
\end{align}\label{eq43}

\end{remark}
\begin{remark}
One should note that the queue length distribution at service completion epoch i.e, $p^+_{n}$ can be found by summing the joint probabilities i.e, $p^+_{n,r}$ over $r$ from $a$ to $b$. One does not need to extract the the probabilities further from $P^+(x)$.
\end{remark}
\hspace*{0.3cm} Thus we now have joint distribution of queue length and server content at service termination epoch as well as only queue length distribution at vacation termination epoch. in next section, the probability distribution at arbitrary slot is obtained.
\section{Joint distribution of queue and server content at arbitrary epoch}\label{JDA}
From the joint distribution at arbitrary epoch, one can compute several marginal distribution and performance measures. We now generate a relationship between the joint probabilities at service completion and arbitrary epochs.
\begin{thm}
The probabilities at service completion and arbitrary epoch are related by
\begin{eqnarray}
p_{0,0} &=& \dfrac{Q^+_{0}}{E^*}\label{eq44}\\
p_{n,0} &=& \dfrac{1}{E^*}\sum_{m=0}^{n}e_{n,m}Q^+_{m},\quad 1 \leq n \leq a-1\label{eq45}\\
p_{0,r} &=& \dfrac{1}{E^{*}}\biggl(\left(1-\delta_{p}\right)\sum_{i=0}^{a-1}g_{r-i}p_{i,0}+p^+_{r}+ Q^+_{r} - p^+_{0,r}\biggr), \quad a \leq r \leq b\label{eq46}\\
p_{n,r} &=& \sum_{i=1}^{n}g_{i}p_{n-i,r} - \dfrac{p^+_{n,r}}{E^*},\quad n \geq 1, \quad a
\leq r \leq b-1 \label{eq47}\\
p_{n,b} &=& \dfrac{1}{E^{*}}\biggl(\left(1-\delta_{p}\right)\sum_{i=0}^{a-1}g_{n+b-i}p_{i,0} +p^+_{n+b}+ Q^+_{n+b} - p^+_{n,b}\biggr), \quad n \geq 1 \label{eq48}\\
Q_{0} &=& \dfrac{1}{E^*}\biggl(p^+_{0}-\left(1-\delta_{p}\right)Q^+_{0}\biggr)\label{eq49}\\
Q_{n} &=& \sum_{i=1}^{n}g_{i}Q_{n-i}+\dfrac{1}{E^*}\biggl(p^+_{n}-\left(1-\delta_{p}\right)Q^+_{n}\biggr), \quad 1 \leq n \leq a-1\label{eq50}\\
Q_{n} &=& \sum_{i=1}^{n}g_{i}Q_{n-i} - \dfrac{Q^+_{n}}{E^*}, \quad n \geq a\label{eq51}
\end{eqnarray}
where $E^* = \lambda \omega + \sum_{n=0}^{a-1}\sum_{j=n}^{a-1}e_{j,n} Q^+_{n}$ and $e_{j,n}$ and $\omega$ are given in lemma \ref{SElem2}
\end{thm}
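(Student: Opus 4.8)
The plan is to establish the relation between arbitrary-epoch and service-completion-epoch probabilities by the standard renewal-reward / rate-balance argument adapted to the discrete-time LAS-DA setting. First I would introduce the long-run rate $E^{\ast}$ at which ``cycle-defining'' events occur — here a cycle is the interval between successive service completions or vacation terminations — and argue that $E^{\ast}$ is the reciprocal of the mean cycle length. The mean cycle length decomposes into the mean time spent in the busy state, the mean time in vacation, and (for single vacation only) the mean time in the dormant state; tracking these three contributions against the pgf quantities $p^{+}_{n}$, $Q^{+}_{n}$ and the arrival-batch coefficients $e_{j,n}$ should reproduce $E^{\ast} = \lambda\omega + \sum_{n=0}^{a-1}\sum_{j=n}^{a-1}e_{j,n}Q^{+}_{n}$, with $\omega$ as given in Lemma \ref{SElem2}.

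The main computational step is to relate each arbitrary-slot probability to its counterpart just after a departure. For the busy-state probabilities $p_{n,r}$, I would take the governing steady-state equations \eqref{eq3}--\eqref{eq5}, pass to the generating functions $p^{\ast}_{n,r}(z)$ via \eqref{eq9}--\eqref{eq11}, evaluate at $z=1$ (which removes the remaining-service-time variable and yields $p_{n,r}=p^{\ast}_{n,r}(1)$), and then substitute the identities \eqref{eq15}--\eqref{eq16} of Lemma \ref{SElem1} together with $\tau^{-1}=E^{\ast}$-type normalization to express everything in terms of $p^{+}_{n,r}$, $p^{+}_{n}$ and $Q^{+}_{n}$. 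The recursive form \eqref{eq47} for $a\le r\le b-1$ comes directly from \eqref{eq10} at $z=1$ after dividing by $\tau$; the closed forms \eqref{eq46} and \eqref{eq48} come from \eqref{eq9} and \eqref{eq11} at $z=1$, using $S^{\ast}_{r}(1)=1$ and collecting the $\left(1-\delta_{p}\right)$ dormant-state contribution $\lambda\sum_{i=0}^{a-1}g_{r-i}p_{i,0}$. An entirely parallel computation on \eqref{eq12}--\eqref{eq14}, using \eqref{eq17}--\eqref{eq18} and $V^{\ast}(1)=1$, gives the vacation relations \eqref{eq49}--\eqref{eq51}, the $p^{+}_{n}-\left(1-\delta_{p}\right)Q^{+}_{n}$ combination arising because in the single-vacation case a fraction of post-departure states feed the dormant state rather than a fresh vacation.

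For the dormant-state probabilities $p_{n,0}$ (relevant only when $\delta_{p}=0$), I would start from \eqref{eq1}--\eqref{eq2}, solve the resulting linear recursion in $n$ using the coefficients $e_{n,m}$ defined by \eqref{eq21} — this is exactly the manipulation already carried out to obtain \eqref{eq22} in the proof of Lemma \ref{SElem2} — and then rewrite $Q_{n}(1)$-terms via \eqref{eq17}--\eqref{eq18} to produce \eqref{eq44}--\eqref{eq45} with the factor $1/E^{\ast}$. Finally I would verify consistency: summing all the stated relations and invoking the normalization $\left(1-\delta_{p}\right)\sum_{n=0}^{a-1}p_{n,0}+\sum_{n=0}^{\infty}\sum_{r=a}^{b}p_{n,r}+\sum_{n=0}^{\infty}Q_{n}=1$ should force the value of $E^{\ast}$, which both pins down the constant and cross-checks every individual formula.

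I expect the main obstacle to be bookkeeping rather than conceptual: correctly partitioning the post-departure mass among the three server states in the single-vacation case, so that the $\left(1-\delta_{p}\right)$ terms and the $e_{j,n}$ convolution sums land in the right equations, and confirming that the normalization indeed yields precisely $E^{\ast}=\lambda\omega+\sum_{n=0}^{a-1}\sum_{j=n}^{a-1}e_{j,n}Q^{+}_{n}$ rather than some rearrangement of it. The discrete-time subtlety that an arrival and a departure can coincide at a slot boundary must also be handled carefully when passing from $p^{\ast}_{n,r}(z)$ at $z=1$ back to arbitrary-slot quantities, but the factor $\bar{\lambda}+\lambda G(x)$ already appearing in \eqref{eq27}--\eqref{eq29} shows the right substitution is available.
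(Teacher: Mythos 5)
Your proposal is correct and follows essentially the same route as the paper: evaluate the generating-function equations \eqref{eq9}--\eqref{eq14} at $z=1$, substitute the Lemma \ref{SElem1} identities to introduce the departure-epoch quantities, handle the dormant-state probabilities via the $e_{n,m}$ recursion from \eqref{eq1}--\eqref{eq2}, and use the normalization (through Lemma \ref{SElem2} and the resulting expression for $\tau$) to identify the constant $E^{*}$. The only cosmetic difference is your renewal-reward framing of $E^{*}$, which the paper obtains purely algebraically as $\tau=\lambda/E^{*}$.
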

\begin{proof}[\textbf{Proof}]
For single vacation putting $\delta_{p}=0$ in equation (\ref{eq1}),(\ref{eq2}) and
then dividing by $\tau$, we obtain using (\ref{eq19})
\begin{eqnarray}
p_{0,0} &=& \dfrac{\biggl(1-\left(1-\delta_{p}\right)\sum_{i=0}^{a-1}p_{i,0}\biggr)}{\lambda \omega} Q^+_{0}\label{eq52}\\
p_{n,0} &=& \dfrac{\biggl(1-\left(1-\delta_{p}\right)\sum_{i=0}^{a-1}p_{i,0}\biggr)}{\lambda \omega} \sum_{m=0}^{n} e_{n,m}Q^+_{m}\label{eq53}
\end{eqnarray}
Now dividing (\ref{eq52}) and (\ref{eq53})
\begin{eqnarray}
p_{n,0} &=& \dfrac{p_{0,0}}{Q^+_{0}} \sum_{m=0}^{n}e_{n,m} Q^+_{m}, \quad 1 \leq n \leq a-1
\label{eq54}
\end{eqnarray}
Using (\ref{eq54}) in (\ref{eq52}),(\ref{eq53})
\begin{eqnarray}
p_{0,0} &=&\dfrac{Q^+_{0}}{\lambda \omega +\left(1-\delta_{p}\right) \sum_{n=0}^{a-1}\sum_{j=n}^{a-1}e_{j,n} Q^+_{n}}\label{eq55}\\
p_{n,0} &=& \dfrac{\sum_{m=0}^{n}e_{n,m} Q^+_{m}}{\lambda \omega + \left(1-\delta_{p}\right)\sum_{n=0}^{a-1}\sum_{j=n}^{a-1}e_{j,n} Q^+_{n}}, \quad 1 \leq n \leq a-1\label{eq56}
\end{eqnarray}
Putting $z=1$ in (\ref{eq9}) and then dividing by $\tau$ we obtain
\begin{eqnarray}
p_{0,r} = \dfrac{\left(1- \left(1-\delta_{p}\right)\sum_{i=0}^{a-1}p_{i,0}\right)}{\lambda \omega}
\bigg(\left(1-\delta_{p}\right)\sum_{i=0}^{a-1}g_{r-i}p_{i,0}+p^+_{r}+Q^+_{r}-p^+_{0,r} \bigg)\label{eq57}
\end{eqnarray}
Now from (\ref{eq52}) and (\ref{eq55}) gives
\begin{eqnarray}
1-\left(1-\delta_{p}\right)\sum_{i=0}^{a-1}p_{i,0}=\dfrac{\lambda \omega}
{\lambda \omega + \left(1-\delta_{p}\right)\sum_{n=0}^{a-1}\sum_{j=n}^{a-1}e_{j,n}Q^+_{n}}\label{eq58}
\end{eqnarray}
Thus using (\ref{eq58}) in (\ref{eq57}) we get (\ref{eq46})\\
Again putting $z=1$ in (\ref{eq10}) to (\ref{eq14}) and by similar approach, we obtain
(\ref{eq47}) to (\ref{eq48})
\end{proof}

\section{Marginal distributions and performance measures}\label{MaD}
Due to applicability of the  concerned model, the marginal distributions and performance measures are the most important ingredients for system designer/vendors. Here the marginal distributions in terms of known distribution are described as follows:
\begin{itemize}
	\item  the distribution of the number of customers in the system (which includes total the number of customers in the queue and with the server), $p_n^{sys},~~n\geq 0$,
	\begin{align*}
	p_n^{sys} = \left\{\begin{array}{r@{\mskip\thickmuskip}l}
	&\left(1-\delta_{p}\right)p_{n,0} +Q_n ~~,\hspace{2.0cm}0\leq n \leq a-1\\
	& \sum_{r=a}^{min(b,n)}p_{n-r,r}+Q_n~~,\hspace{1.3cm} a\leq n \leq b,\vspace{0.1cm}\\
	&\sum_{r=a}^{b}p_{n-r,r}\hspace{3.3cm} n\geq b+1.
	\end{array}\right.
	\end{align*}
	
	\item the distribution of the number of customers in the queue $p_n^{queue},~~n\geq 0$,
	\begin{align*}
	p_n^{queue} =\left\{\begin{array}{r@{\mskip\thickmuskip}l}
	& \left(1-\delta_{p}\right)p_{n,0}+\sum_{r=a}^{b}p_{n,r}+Q_n~,\hspace{1.3cm}0\leq n \leq a-1\\
	&\sum_{r=a}^{b}p_{n,r}+Q_n\hspace{4.0cm} n\geq a.
	\end{array}\right.
	\end{align*}
	
	\item the probability that the server is in busy state ($P_{busy}$), in the vacation state ($Q_{vac}$), and in the dormant state ($P_{dor}$) are given by $P_{busy}=\sum_{n=0}^{\infty}\sum_{r=a}^{b}p_{n,r}$,~ $Q_{vac}=\sum_{n=0}^{\infty}Q_n$,~ $P_{dor}=\sum_{n=0}^{a-1}p_{n,0}$.
	
	\item the conditional distribution of the number of customers undergoing service with the server given that the server is busy,
	$p_r^{ser}$, ($a \leq r \leq b $)
	\begin{align*}
	p_r^{ser} = \sum_{n=0}^{\infty}p_{n,r}/P_{busy} ~~(a\leq r \leq b).
	\end{align*}
	
\end{itemize}
\hspace*{0.3cm}Performance measures are very crucial from the application point of view as it improves the efficiency of the system. Some performance measures
can be derived from the pgf, and some can be obtained using completely known state probabilities and marginal distributions,
in a very simplified manner are presented below:
\begin{itemize}
	\item average number of customers waiting in the queue at any arbitrary time ($L_q)=\sum_{n=0}^{\infty}n p_n^{queue}$,
    \item average number of customers waiting in the queue service completion epoch ($L_q^+$)
    \begin{small}
\begin{eqnarray}
 &&2 \left(b - K^{'(b)}(1)\right)L^+_{q}\nn\\
 &&= \sum_{n=0}^{a-1}\left(p^+_{n} + \delta_{p} Q^+_{n}\right)\left[H^{''}(1) + 2n H^{'}(1) + 2  H^{'}(1)K^{'(b)}(1) + n K^{'(b)}(1)\right]\nn\\
	&&- \sum_{n=0}^{a-1} Q^+_{n}\left(1 - \delta_{p}\right)\left(n(n-1) + nK^{'(b)}(x) + K^{''(b)}(1)\right)\nn\\
	&&+\left(1-\delta_{p}\right)\sum_{n=0}^{a-1}\sum_{j=n}^{a-1}e_{j,n} \left[\sum_{i=a}^{b}\left\{K^{''(i)}(1) + 2bK^{'(i)}(1) + b(b-1)\right\}\right.\nn\\
&&\left.+\sum_{i=b+j-1}^{\infty}g_{i}\left\{K^{''(b)}(1) + 2(i+j)K^{'(b)}(1) + (i+j)(i+j-1)\right\}\right]\nn\\
	&&+ \sum_{n=a}^{b-1}\left(p^+_{n}+Q^+_{n}\right)\left[b(b-1) + 2bK^{'(n)}(1) + K^{''(n)}(1)-\left\{n(n-1) + 2nK^{'(b)}(1) + K^{''(b)}(1)\right\}\right] + K{''(b)}(1) -b(b-1)\nn
\end{eqnarray}
\end{small}
	\item average number in the system ($L)=\sum_{n=0}^{\infty}n p_n^{sys}$,
	\item average number of customers with the server ($L_s)=\sum_{r=a}^{b}r p_r^{ser}$,
	\item average waiting time of a customer in the queue $(W_q)=\frac{L_q}{\la \bar{g}}$ as well as in the system $(W)=\frac{L}{\la \bar{g}}$.
\end{itemize}

\section{Numerical illustration}\label{NR}
The main objective of this section is to bring out the qualitative aspects of the concerned queue from the implementation point of view through interesting numerical examples which can be fruitful to real-life scenarios. We carry out extensive numerical results by evoking the service or vacation time distributions as discrete phase type (PH$_D$), negative binomial (NB) and geometric (Geo) distributions. All the calculations were performed using Maple 15 on PC having
configuration Intel (R) Core (TM) i5-3470 CPU Processor @ 3.20 GHz with 4.00 GB of RAM, and the results are presented in the tabular form.\\
\hspace*{0.3cm}It is worthwhile to mention here that in most of the practical circumstances, the number of customers in an arriving group is finite. As a consequence, for the computational purpose we consider arriving group size to be of finite support.\\
\textbf{Example 1:} Due to the several inescapable reasons such as bad weather, electrical faults, fading channel etc. we have to usually deal with the transmission errors in a wireless communication channel. Discrete phase type distribution (PH$_D$) plays a noteworthy role to control this error. It also covers a wide range of almost all relevant discrete distributions. Keeping this in mind, in this example we consider service time to follow discrete phase type (PH$_D$) distribution with the representation ($\boldsymbol{\beta}_i,~\textbf{T}_i$) for $a\leq i \leq b$, where $\boldsymbol{\beta}_i$ is a row vector of order $\nu$ and $\textbf{T}_i$ is a square matrix of order $\nu$. The associated  pgf is $z \boldsymbol{\beta}_i\left(I-z\textbf{T}_i\right)^{-1}\eta_i$ with $\eta_i+\textbf{T}_ie=e$, where $I$ is the identity matrix of appropriate order and $e$ is the column vector. Consequently,
$K^{(i)}(x)=(\bar \la+\la G(x)) \boldsymbol{\beta}_i\left(I-(\bar \la+\la G(x))\textbf{T}_i\right)^{-1}\eta_i$.  Here we consider $a=8,~b=15,~\la=0.055, g_1=0.25, g_2=0.25, g_3=0.25, g_4=0.25$, and the matrices for PH$_D$ distribution for different batch-size is taken as $\textbf{T}_r=\left(
                                                                                                                       \begin{array}{ccc}
                                                                                                                         1-\theta_r & \theta_r & 0 \\
                                                                                                                         0 & 1-\theta_r & \theta_r \\
                                                                                                                          0& 0 & 1-\theta_r \\
                                                                                                                       \end{array}
                                                                                                                     \right)$
with $\theta_8=0.2, \theta_9=0.1, \theta_{10}=0.066667, \theta_{11}=0.05, \theta_{12}=0.04, \theta_{13}=0.033333, \theta_{14}=0.028571, \theta_{15}=0.025$ so that $s_8=8.75, s_9=17.5, s_{10}=26.25, s_{11}=35.0, s_{12}=43.75, s_{13}=52.5, s_{14}=61.25, s_{15}=70.0$ and $\rho=0.641666$.
The vacation time follows geometric distribution with pmf $(v_j)=\eta~(1-\eta)^{j-1},~~j\geq 1$ with $\eta=0.4$, pgf $=\dfrac{\eta z}{1-(1-\eta)z}$, mean ($\bar v$)=2.50. The joint distribution at service completion and arbitrary epochs along with some performance measures are displayed in Tables [\ref {tb1}-\ref {tb4}].\\
\newpage
\begin{table}[h!]
\begin{tiny}
\caption{Probability distribution at service completion epoch for single vacation}\lb{tb1}$\vspace{0.03cm}$
\begin{tabular}{|c|c|c|c|c|c|c|c|c|c|c|} \hline
$n$& $p_{n,8}^+$& $p_{n,9}^+$& $p_{n,10}^+$& $p_{n,11}^+$& $p_{n,12}^+$& $p_{n,13}^+$&$p_{n,14}^+$&$p_{n,15}^+$& $p_n^+$&$Q_n^+$\\\hline
0&0.130337&0.069044&0.038114&0.016857&0.016857&0.000870&0.000641&0.000476&0.257609&0.224887\\
1&0.012320&0.009892&0.006540&0.003195&0.003195&0.000183&0.000139&0.000501&0.033029&0.036390\\
2&0.013229&0.011105&0.007537&0.003744&0.003744&0.000218&0.000167&0.000549&0.036856&0.040845\\
3&0.014200&0.012461&0.008680&0.004385&0.000359&0.000359&0.000201&0.000620&0.041170&0.045857\\
4&0.015236&0.013974&0.013974&0.005133&0.000425&0.000312&0.000241&0.000713&0.046029&0.051495\\
5&0.004023&0.005769&0.004954&0.002810&0.000247&0.000247&0.000150&0.000725&0.018870&0.021795\\
10&0.000570&0.001740&0.002144&0.001506&0.000152&0.000128&0.000110&0.000746&0.007100&0.001226\\
25&0.000000&0.000013&0.000061&0.000095&0.000016&0.000020&0.000022&0.000413&0.000642&0.000000\\
50&0.000000&0.000000&0.000000&0.000000&0.000000&0.000000&0.000000&0.000090&0.000090&0.000000\\
75&0.000000&0.000000&0.000000&0.000000&0.000000&0.000000&0.000000&0.000017&0.000017&0.000000\\
100&0.000000&0.000000&0.000000&0.000000&0.000000&0.000000&0.000000&0.000003&0.000003&0.000000\\
$\geq$120&0.000000&0.000000&0.000000&0.000000&0.000000&0.000000&0.000000&0.000000&0.000000&0.000000\\\hline
\end{tabular}
$\vspace{0.03cm}$
    \caption{Probability distribution at arbitrary slot for single vacation }\lb{tb2}$\vspace{0.01cm}$
\begin{tabular}{|c|c|c|c|c|c|c|c|c|c|c|c|c|} \hline
$n$& $p_{n,0}$ & $p_{n,8}$ & $p_{n,9}$& $p_{n,10}$& $p_{n,11}$& $p_{n,12}$&$p_{n,13}$&$p_{n,14}$&$p_{n,15}$&$Q_n$&$p_n^{queue}$ \\\hline
0&0.112501&0.034574&0.037555&0.031608&0.018848&0.001786&0.001479&0.001277&0.001087&0.016369&0.257089\\
1&0.046329&0.002480&0.004440&0.004630&0.003113&0.000318&0.000278&0.000249&0.001124&0.002410&0.065375\\
2&0.060140&0.002645&0.004943&0.005289&0.003617&0.000374&0.000329&0.000298&0.001208&0.002699&0.081546\\
3&0.077683&0.002821&0.005500&0.006039&0.004200&0.000440&0.000391&0.000355&0.001341&0.003025&0.101799\\
4&0.099924&0.003008&0.006119&0.006893&0.004876&0.000517&0.000463&0.000424&0.001518&0.003391&0.127137\\
5&0.081922&0.000726&0.002364&0.003234&0.002546&0.000288&0.000271&0.000256&0.001527&0.001418&0.094557\\
6&0.089928&0.000606&0.002118&0.003022&0.002444&0.000282&0.000269&0.000257&0.001544&0.001303&0.101777\\
7&0.096110&0.000468&0.001790&0.002687&0.002246&0.000265&0.000256&0.000248&0.001559&0.001139&0.106773\\
10&&0.000093&0.000653&0.001286&0.001259&0.000165&0.000171&0.000175&0.001495&0.000050&0.005352\\
25&&0.000000&0.000004&0.000031&0.000067&0.000014&0.000022&0.000030&0.000751&0.000000&0.000922\\
50&&0.000000&0.000000&0.000000&0.000000&0.000000&0.000000&0.000012&0.000155&0.000000&0.000157\\
75&&0.000000&0.000000&0.000000&0.000000&0.000000&0.000000&0.000002&0.000029&0.000000&0.000029\\
100&&0.000000&0.000000&0.000000&0.000000&0.000000&0.000000&0.000000&0.000005&0.000000&0.000005\\
$\geq$120&&0.000000&0.000000&0.000000&0.000000&0.000000&0.000000&0.000000&0.000000&0.000000&0.000000\\\hline
\multicolumn{12}{|c|}{$L$=7.239171,~~ $L_q$=4.052410,~~ $L_{s}$=10.509858,}\\
\multicolumn{12}{|c|}{$P_{busy}$=0.303392,~~~ $W$=52.648521,~~ $W_q$=29.472075}\\\hline
\end{tabular}
\end{tiny}
\end{table}

\begin{table}
\centering
\begin{tiny}
    \caption{ Probability distribution at service completion epoch for multiple vacation}\lb{tb3}$\vspace{0.03cm}$

\begin{tabular}{|c|c|c|c|c|c|c|c|c|c|c|} \hline
$n$& $p_{n,8}^+$& $p_{n,9}^+$& $p_{n,10}^+$& $p_{n,11}^+$& $p_{n,12}^+$& $p_{n,13}^+$&$p_{n,14}^+$&$p_{n,15}^+$& $p_n^+$&$Q_n^+$\\\hline
0&0.011188&0.006086&0.003491&0.001684&0.000339&0.000207&0.000127&0.000076&0.023201&0.159460\\
1&0.001057&0.000872&0.000599&0.000319&0.000068&0.000043&0.000027&0.000069&0.003057&0.063198\\
2&0.001135&0.000979&0.000690&0.000374&0.000081&0.000052&0.000033&0.000075&0.003421&0.081806\\
3&0.001218&0.001098&0.000795&0.000438&0.000096&0.000062&0.000040&0.000085&0.003834&0.105439\\
4&0.001307&0.001231&0.000915&0.000513&0.000113&0.000074&0.000048&0.000097&0.004302&0.135398\\
5&0.000345&0.000508&0.000453&0.000280&0.000066&0.000045&0.000030&0.000097&0.001827&0.110483\\
10&0.000048&0.000153&0.000196&0.000150&0.000040&0.000030&0.000022&0.000100&0.000743&0.008535\\
25&0.000000&0.000001&0.000006&0.000009&0.000004&0.000004&0.000004&0.000056&0.000086&0.000000\\
50&0.000000&0.000000&0.000000&0.000000&0.000000&0.000000&0.000000&0.000012&0.000012&0.000000\\
75&0.000000&0.000000&0.000000&0.000000&0.000000&0.000000&0.000000&0.000002&0.000002&0.000000\\
100&0.000000&0.000000&0.000000&0.000000&0.000000&0.000000&0.000000&0.000000&0.000000&0.000000\\
$\geq$120&0.000000&0.000000&0.000000&0.000000&0.000000&0.000000&0.000000&0.000000&0.000000&0.000000\\\hline
\end{tabular}
\end{tiny}
\end{table}
\newpage
\begin{table}
\centering
\begin{tiny}
    \caption{Probability distribution at arbitrary slot for multiple vacation }\lb{tb4}$\vspace{0.03cm}$
\begin{tabular}{|c|c|c|c|c|c|c|c|c|c|c|c|c|} \hline
$n$& $p_{n,0}$ & $p_{n,8}$ & $p_{n,9}$& $p_{n,10}$& $p_{n,11}$& $p_{n,12}$&$p_{n,13}$&$p_{n,14}$&$p_{n,15}$&$Q_n$&$p_n^{queue}$ \\\hline
0&0.073324&0.002728&0.003043&0.002661&0.001731&0.000439&0.000324&0.000234&0.000160&0.010668&0.095316\\
1&0.047391&0.000195&0.000359&0.000389&0.000285&0.000078&0.000061&0.000045&0.000142&0.004073&0.053024\\
2&0.067796&0.000208&0.000400&0.000445&0.000332&0.000092&0.000072&0.000054&0.000152&0.005258&0.074813\\
3&0.095612&0.000222&0.000445&0.000508&0.000385&0.000108&0.000085&0.000065&0.000168&0.006763&0.104366\\
4&0.133291&0.000237&0.000495&0.000580&0.000447&0.000127&0.000101&0.000077&0.000191&0.008669&0.144220\\
5&0.136826&0.000057&0.000191&0.000272&0.000233&0.000071&0.000059&0.000047&0.000188&0.007031&0.144979\\
6&0.164142&0.000047&0.000171&0.000254&0.000224&0.000069&0.000059&0.000047&0.000190&0.007709&0.172916\\
7&0.192068&0.000036&0.000145&0.000226&0.000206&0.000065&0.000056&0.000045&0.000192&0.008232&0.201275\\
25&&0.000000&0.000000&0.000002&0.000006&0.000004&0.000005&0.000006&0.000094&0.000000&0.000118\\
50&&0.000000&0.000000&0.000000&0.000000&0.000000&0.000000&0.000000&0.000019&0.000000&0.000019\\
75&&0.000000&0.000000&0.000000&0.000000&0.000000&0.000000&0.000000&0.000004&0.000000&0.000004\\
100&&0.000000&0.000000&0.000000&0.000000&0.000000&0.000000&0.000000&0.000000&0.000000&0.000000\\
$\geq$120&&0.000000&0.000000&0.000000&0.000000&0.000000&0.000000&0.000000&0.000000&0.000000&0.000000\\\hline
\multicolumn{12}{|c|}{$L$=4.746889,~~ $L_q$=4.422526,~~ $L_{s}$=10.999253,}\\
\multicolumn{12}{|c|}{$P_{busy}$=0.029511,~~~ $W$=34.522829,~~ $W_q$=32.163831}\\\hline
\end{tabular}
\end{tiny}
\end{table}

\newpage
\textbf{Example 2:} In this example, the service time has been considered as negative binomial (NB) distribution with pmf
\bea &&s_i(n)=\binom{n-1}{r-1}\mu_i^r(1-\mu_i)^{n-r},~n=r,r+1,r+2,\ldots ~ \mbox{which consequently leads}\nn\\&&~ K^{(i)}(x)=\left\{\frac{\mu_i(\bar\la+\la G(x))}{1-(1-\mu_i)(\bar\la+\la G(x))}\right\}^r.\nn\eea
In particular, here $r=3,~\la=0.10,~a=15,~b=25,~g_1=0.5,~g_5=0.5, \bar g=3.0$ and service rates are taken as follows: $\mu_{15}=0.300000$, $\mu_{16}=0.250000$, $\mu_{17}=0.214285$, $\mu_{18}=0.187500$, $\mu_{19}=0.166667$, $\mu_{20}=0.150000$, $\mu_{21}=0.136364$, $\mu_{22}=0.125000$, $\mu_{23}=0.115385$, $\mu_{24}=0.107142$, $\mu_{25}=0.100000$ so that $\rho=0.360000$. The vacation time follows geometric distribution with pmf $(v_j)=\eta~(1-\eta)^{j-1},~~j\geq 1$ with $\eta=0.2$, pgf $=\dfrac{\eta z}{1-(1-\eta)z}$, mean ($\bar v$)=5.0. The joint distribution at service completion and arbitrary epochs along with some performance measures are displayed in Tables [\ref {tb5}-\ref {tb8}].

\begin{sidewaystable}
\begin{tiny}
\caption{Probability distribution at service completion epoch for single vacation}\lb{tb5}$\vspace{0.03cm}$
\begin{tabular}{|c|c|c|c|c|c|c|c|c|c|c|c|c|c|} \hline
$n$& $p_{n,15}^+$& $p_{n,16}^+$& $p_{n,17}^+$& $p_{n,18}^+$& $p_{n,19}^+$& $p_{n,20}^+$&$p_{n,21}^+$&$p_{n,22}^+$&$p_{n,23}^+$&$p_{n,24}^+$&$p_{n,25}^+$& $p_n^+$&$Q_n^+$\\\hline
0&0.055782&0.035631&0.029595&0.020952&0.013528&0.000445&0.000387&0.000211&0.000089&0.000033&0.000051&0.156709&0.100741\\
1&0.025127&0.018272&0.016843&0.012993&0.009019&0.000316&0.000290&0.000165&0.000073&0.000028&0.000098&0.083228&0.073492\\
2&0.006149&0.005231&0.005454&0.004650&0.003507&0.000131&0.000128&0.000077&0.000035&0.000014&0.000107&0.025490&0.029857\\
3&0.001127&0.001142&0.001362&0.001294&0.001067&0.000043&0.000045&0.000028&0.000013&0.000005&0.000087&0.006216&0.009172\\
4&0.000176&0.000214&0.000294&0.000312&0.000283&0.000012&0.000013&0.000009&0.000004&0.000002&0.000058&0.001381&0.002420\\
5&0.025151&0.018309&0.016901&0.013062&0.009087&0.000319&0.000294&0.000168&0.000074&0.000029&0.000086&0.083485&0.074052\\
10&0.006169&0.005267&0.005519&0.004736&0.003602&0.000136&0.000134&0.000082&0.000038&0.000015&0.000085&0.025788&0.030602\\
20&0.000178&0.000221&0.000312&0.000344&0.000324&0.000014&0.000017&0.000012&0.000006&0.000003&0.000041&0.001478&0.000872\\
40&0.000001&0.000001&0.000000&0.000000&0.000000&0.000000&0.000000&0.000000&0.000000&0.000000&0.000003&0.000005&0.000000\\
$\geq$50&0.000000&0.000000&0.000000&0.000000&0.000000&0.000000&0.000000&0.000000&0.000000&0.000000&0.000000&0.000000&0.000000\\\hline
\end{tabular}
$\vspace{0.03cm}$
    \caption{Probability distribution at arbitrary slot for single vacation }\lb{tb6}$\vspace{0.01cm}$
\begin{tabular}{|c|c|c|c|c|c|c|c|c|c|c|c|c|c|c|c|} \hline
$n$& $p_{n,0}$ & $p_{n,15}$ & $p_{n,16}$& $p_{n,17}$& $p_{n,18}$& $p_{n,19}$&$p_{n,20}$&$p_{n,21}$&$p_{n,22}$&$p_{n,23}$&$p_{n,24}$&$p_{n,25}$&$Q_n$&$p_n^{queue}$ \\\hline
0&0.035348&0.030797&0.025176&0.025977&0.022344&0.017229&0.000668&0.000677&0.000425&0.000206&0.000087&0.000151&0.019638&0.178730\\
1&0.043461&0.006581&0.006176&0.007078&0.006613&0.005450&0.000223&0.000236&0.000154&0.000077&0.000033&0.000218&0.013235&0.089542\\
2&0.032207&0.001133&0.001252&0.001625&0.001674&0.001494&0.000065&0.000073&0.000050&0.000026&0.000011&0.000193&0.005085&0.044893\\
3&0.019322&0.000170&0.000225&0.000334&0.000383&0.000372&0.000017&0.000020&0.000015&0.000008&0.000002&0.000131&0.001505&0.022512\\
4&0.010510&0.000023&0.000037&0.000064&0.000081&0.000087&0.000004&0.000005&0.000004&0.000002&0.000001&0.000076&0.000388&0.011287\\
5&0.048913&0.006584&0.006182&0.007090&0.006629&0.005469&0.000224&0.000238&0.000155&0.000078&0.000034&0.000121&0.013322&0.095046\\
10&0.049000&0.001135&0.001257&0.001637&0.001694&0.001519&0.000066&0.000075&0.000051&0.000027&0.000012&0.000079&0.00519&0.061759\\
20&&0.000023&0.000038&0.000067&0.000088&0.000097&0.000005&0.000006&0.000005&0.000003&0.000001&0.000026&0.000122&0.000486\\
40&&0.000000&0.000000&0.000000&0.000000&0.000000&0.000000&0.000000&0.000000&0.000000&0.000000&0.000001&0.000000&0.000001\\
$\geq$50&&0.000000&0.000000&0.000000&0.000000&0.000000&0.000000&0.000000&0.000000&0.000000&0.000000&0.000000&0.000000&0.000000\\\hline
\multicolumn{15}{|c|}{$L$=10.416824,~~ $L_q$=6.291121,~~ $L_{s}$=17.093524,}\\
\multicolumn{15}{|c|}{$P_{busy}$=0.241367,~~~ $W$=34.722748,~~ $W_q$=20.970406}\\\hline
\end{tabular}
\end{tiny}
\end{sidewaystable}

\begin{sidewaystable}
\begin{tiny}
\caption{Probability distribution at service completion epoch for multiple vacation}\lb{tb7}$\vspace{0.03cm}$
\begin{tabular}{|c|c|c|c|c|c|c|c|c|c|c|c|c|c|} \hline
$n$& $p_{n,15}^+$& $p_{n,16}^+$& $p_{n,17}^+$& $p_{n,18}^+$& $p_{n,19}^+$& $p_{n,20}^+$&$p_{n,21}^+$&$p_{n,22}^+$&$p_{n,23}^+$&$p_{n,24}^+$&$p_{n,25}^+$& $p_n^+$&$Q_n^+$\\\hline
0&0.008151&0.006092&0.004949&0.003601&0.002400&0.000907&0.000585&0.000462&0.000345&0.000238&0.000110&0.027845&0.050122\\
1&0.003671&0.003124&0.002817&0.002233&0.001600&0.000643&0.000438&0.000362&0.000282&0.000202&0.000172&0.015549&0.055834\\
2&0.000898&0.000894&0.000912&0.000799&0.000622&0.000268&0.000194&0.000169&0.000138&0.000103&0.000181&0.005182&0.038801\\
3&0.000164&0.000195&0.000227&0.000222&0.000189&0.000088&0.000067&0.000062&0.000053&0.000041&0.000165&0.001479&0.022582\\
4&0.000025&0.000036&0.000049&0.000053&0.000050&0.000025&0.000020&0.000020&0.000018&0.000014&0.000139&0.000455&0.012258\\
5&0.003675&0.003130&0.002826&0.002245&0.001612&0.000649&0.000444&0.000368&0.000288&0.000207&0.000202&0.015651&0.062193\\
10&0.000901&0.000900&0.000923&0.000814&0.000639&0.000278&0.000203&0.000179&0.000148&0.000112&0.000198&0.005299&0.058150\\
20&0.000026&0.000037&0.000052&0.000059&0.000057&0.000030&0.000026&0.000027&0.000025&0.000022&0.000088&0.000453&0.004331\\
40&0.000000&0.000000&0.000000&0.000000&0.000000&0.000000&0.000000&0.000000&0.000000&0.000000&0.000005&0.000005&0.000006\\
$\geq$50&0.000000&0.000000&0.000000&0.000000&0.000000&0.000000&0.000000&0.000000&0.000000&0.000000&0.000000&0.000000&0.000000\\\hline
\end{tabular}
$\vspace{0.03cm}$
    \caption{Probability distribution at arbitrary slot for multiple vacation }\lb{tb8}$\vspace{0.01cm}$
\begin{tabular}{|c|c|c|c|c|c|c|c|c|c|c|c|c|c|c|c|} \hline
$n$& $p_{n,0}$ & $p_{n,15}$ & $p_{n,16}$& $p_{n,17}$& $p_{n,18}$& $p_{n,19}$&$p_{n,20}$&$p_{n,21}$&$p_{n,22}$&$p_{n,23}$&$p_{n,24}$&$p_{n,25}$&$Q_n$&$p_n^{queue}$ \\\hline
0&0.016404&0.004197&0.004015&0.004052&0.003582&0.002851&0.001269&0.000954&0.000868&0.000742&0.0005821&0.000303&0.009113&0.048937\\
1&0.026475&0.000897&0.000985&0.001104&0.001060&0.000902&0.000424&0.000333&0.000315&0.000278&0.000224&0.000329&0.009645&0.042976\\
2&0.025936&0.000154&0.000199&0.000253&0.000268&0.000247&0.000124&0.000103&0.000102&0.000094&0.000078&0.000301&0.006519&0.034383\\
3&0.020359&0.000023&0.000035&0.000052&0.000061&0.000061&0.000033&0.000029&0.000030&0.000029&0.000025&0.000259&0.003743&0.024745\\
4&0.014191&0.000003&0.000005&0.000009&0.000013&0.000014&0.000008&0.000008&0.000008&0.000008&0.000008&0.000210&0.002020&0.016510\\
5&0.035652&0.000897&0.000986&0.001106&0.001063&0.000905&0.000426&0.000335&0.000317&0.000281&0.000227&0.000265&0.010689&0.053153\\
10&0.060181&0.000154&0.000200&0.000255&0.000271&0.000251&0.000126&0.000106&0.000106&0.000098&0.000082&0.000174&0.009689&0.071698\\
20&&0.000003&0.000006&0.000010&0.000014&0.000016&0.000009&0.000009&0.000010&0.000011&0.000010&0.000052&0.000566&0.000722\\
40&&0.000000&0.000000&0.000000&0.000000&0.000000&0.000000&0.000000&0.000000&0.000000&0.000000&0.000002&0.000000&0.000002\\
$\geq$50&&0.000000&0.000000&0.000000&0.000000&0.000000&0.000000&0.000000&0.000000&0.000000&0.000000&0.000000&0.000000&0.000000\\\hline
\multicolumn{15}{|c|}{$L$=9.752527,~~ $L_q$=8.769886,~~ $L_{s}$=18.728021,}\\
\multicolumn{15}{|c|}{$P_{busy}$=0.052468,~~~ $W$=32.508423,~~ $W_q$=29.232956}\\\hline
\end{tabular}
\end{tiny}
\end{sidewaystable}

\newpage
\section{Conclusion}\label{CN}
In this paper, we have successfully analyzed a discrete-time group-arrival and batch-service queue with batch-size-dependent service policy along with single and multiple vacation. Employing generating function technique, we have derived the bivariate probability generating function of queue length and server content distribution at service completion epoch. The joint probabilities are extracted and presented in a simple and tractable form. We have also acquired
the joint distribution at arbitrary slot after establishing a relationship with the probabilities at service completion epoch. Several noteworthy marginal distribution and performance measures are reported. The analytical results have been illustrated numerically with significant distributions which can cope up with real-life situations. At the end we hope that the results will be fruitful to the vendors/system designers from application perspective.

\bibliographystyle{unsrtnat}
\bibliography{discretevacation}

\end{document}